\DeclareMathOperator{\cl}{cl}   
\DeclareMathOperator{\rk}{rk}
\DeclareMathOperator{\acl}{acl}   
\DeclareMathOperator{\td}{td}  
\DeclareMathOperator{\ldim}{ldim}  
\DeclareMathOperator{\Ann}{Ann}   
\DeclareMathOperator{\Der}{Der}  
\newcommand{\D}{\ensuremath{\partial}} 
\DeclareMathOperator{\ecl}{ecl} 
\newcommand{\card}[1]{\left| #1 \right|}   
\newcommand{\restrict}[1]{\ensuremath{\!\!\upharpoonright_{#1}}}
\newcommand{\N}{\ensuremath{\mathbb{N}}}
\newcommand{\Z}{\ensuremath{\mathbb{Z}}}
\newcommand{\Q}{\ensuremath{\mathbb{Q}}}
\newcommand{\R}{\ensuremath{\mathbb{R}}}
\newcommand{\C}{\ensuremath{\mathbb{C}}}
\newcommand{\Rexp}{\ensuremath{\mathbb{R}_{\mathrm{exp}}}}
\newcommand{\Cexp}{\ensuremath{\mathbb{C}_{\mathrm{exp}}}}
\newcommand{\proves}{\ensuremath{\vdash}}
\newcommand{\ga}{\ensuremath{{\mathbb{G}_\mathrm{a}}}}   
\newcommand{\gm}{\ensuremath{{\mathbb{G}_\mathrm{m}}}}  
\renewcommand{\phi}{\varphi}
\renewcommand{\le}{\ensuremath{\leqslant}}
\renewcommand{\ge}{\ensuremath{\geqslant}}
\newcommand{\tuple}[1]{\ensuremath{\langle #1 \rangle}}
\newcommand{\class}[2]{\ensuremath{\left\{ #1 \,\left|\, #2 \right.\right\}}}
\newcommand{\into}{\hookrightarrow}
\newcommand{\subs}{\subseteq} 
\newcommand{\sups}{\supseteq} 
\newcommand{\minus}{\ensuremath{\smallsetminus}}
\newcommand{\strong}{\ensuremath{\lhd}} 
\newcommand{\nstrong}{\ensuremath{\not\kern-4pt\lhd\;}} 
\newbox\noforkbox \newdimen\forklinewidth
\noforkbox\hbox{\lower 2pt\box1\lower
2pt\box0\relax}
\def\unionstick{\mathop{\copy\noforkbox}\limits}
\def\nonfork_#1{\unionstick_{\textstyle #1}}
\newbox\doesforkbox
\doesforkbox\hbox{\lower 2pt\box1 \lower
2pt\box2\lower2pt\box0\relax}
\def\nunionstick{\mathop{\copy\doesforkbox}\limits}
\def\fork_#1{\nunionstick_{\textstyle #1}}
\newcommand{\ra}[3]{\ensuremath{#1 \stackrel{#2}{\longrightarrow} #3}}
\newcommand{\leteq}{\mathrel{\mathop:}=}
\newtheorem{prop}{Proposition}[section]
\newtheorem{cor}[prop]{Corollary}
\newtheorem{theorem}[prop]{Theorem}
\newtheorem{lemma}[prop]{Lemma}
\newtheorem{fact}[prop]{Fact}
\theoremstyle{definition}
\newtheorem{defn}[prop]{Definition}
\newtheorem{example}[prop]{Example}
\newtheorem{remark}[prop]{Remark}
\DeclareMathOperator{\EDer}{EDer}
\newcommand{\egg}{exponential-graph-generated}
\title{Exponential Algebraicity in Exponential Fields}
\author{Jonathan Kirby}
\address{Mathematical Institute\\University of Oxford\\UK}
\subjclass[2000]{03C60}
\begin{document}

\begin{abstract}
  I give an algebraic proof that the exponential algebraic closure operator in an exponential
  field is always a pregeometry, and show that its dimension function satisfies a weak Schanuel property. A corollary is that there are at most countably many essential counterexamples to Schanuel's conjecture.
\end{abstract}

\maketitle

\section{Introduction}

In a field, the notion of algebraicity is captured by the algebraic closure operator, $\acl$. Algebraic closure is a pregeometry, that is, a closure operator of finite character satisfying the Steinitz exchange property
\[a \in \acl(C \cup\{b\}) \minus \acl(C) \implies b \in \acl(C \cup\{a\})\]
hence it gives rise to a dimension function, in this case transcendence degree. The analogous closure operator, $\ecl^F$, in an exponential field $F$ was defined by Macintyre \cite{Macintyre96}. In the special case of the real exponential field $\Rexp = \tuple{\R;+,\cdot,\exp}$, where $\exp$ is the usual exponential function $x \mapsto e^x$, Wilkie showed that $\ecl^\R$ is a pregeometry. His technique was to define a pregeometry $\cl^\R$ by derivations, and, using techniques of o-minimality and real analysis, to construct enough derivations to show that the two closure operators were equal. He later extended the result to the complex exponential field $\Cexp$ \cite{Wilkie_local_definability}, still using analytic techniques and the major theorem that the real field with exponentiation and restricted analytic functions is o-minimal.

Looking to study \Cexp\ in another way, Zilber \cite{Zilber05peACF0} constructed an exponential
field using the \emph{amalgamation of strong extensions} technique of Hrushovski \cite{Hru93}, and conjectured that it is isomorphic to \Cexp. His exponential field comes with a pregeometry satisfying an important transcendence property, the Schanuel property.

In this paper I give an algebraic proof of the generalization of Wilkie's result to an arbitrary exponential field:
\begin{theorem}\label{ecl is pregeometry}
  For any (total or partial) exponential field $F$, the closure operator $\ecl^F$ is a pregeometry, and it always agrees with the pregeometry $\cl^F$ defined using derivations.
\end{theorem}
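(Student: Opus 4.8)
The plan is to realise $\cl^F$ concretely through a universal module of exponential differentials and then squeeze it between two inclusions with $\ecl^F$. Let $d\colon F \to \Omega$ be the universal E-derivation, so that $\Omega$ is the $F$-vector space presented by symbols $dx$ (for $x \in F$) modulo additivity, the Leibniz rule, and the exponential relation $d(\exp x) = \exp(x)\,dx$ for $x \in \dom(\exp)$. For a set $C \subseteq F$ write $\langle dC\rangle$ for the $F$-subspace of $\Omega$ spanned by $\{dc : c \in C\}$, and declare $a \in \cl^F(C)$ iff $da \in \langle dC\rangle$. By the universal property, an E-derivation $D\colon F \to M$ with $D\restrict{C}=0$ is the same datum as an $F$-linear map $\Omega/\langle dC\rangle \to M$, so $a \in \cl^F(C)$ precisely when every such $D$ kills $a$; this matches the derivation definition of $\cl^F$. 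That $\cl^F$ is a pregeometry is then pure linear algebra in $\Omega$: finite character holds because $da$ is a single vector, and exchange is the vector-space exchange property, namely if $a \in \cl^F(C\cup\{b\}) \setminus \cl^F(C)$, i.e.\ $da \in \langle dC, db\rangle \setminus \langle dC\rangle$, then $db \in \langle dC, da\rangle$, whence $b \in \cl^F(C\cup\{a\})$.

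It remains to prove $\ecl^F = \cl^F$. For the inclusion $\ecl^F(C) \subseteq \cl^F(C)$ I would argue as in the field case. If $a$ is exponentially algebraic over $C$, it sits in a Khovanskii system: a tuple $\bar z$ with $a = z_1$ and exponential terms over $C$ vanishing at $(\bar z, \exp\bar z)$ whose Jacobian $\Jac$ is nonsingular. Applying any E-derivation $D$ with $D\restrict{C}=0$ and using $D(\exp x) = \exp(x)\,Dx$ to differentiate the system yields a homogeneous linear system $\Jac \cdot (Dz_1,\dots,Dz_n)^{t} = 0$; nonsingularity of $\Jac$ forces every $Dz_i = 0$, in particular $Da = 0$. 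Hence $a \in \cl^F(C)$.

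The hard direction, and the main obstacle, is $\cl^F(C) \subseteq \ecl^F(C)$, equivalently: if $a \notin \ecl^F(C)$ then some E-derivation kills $C$ but not $a$, so that $da \notin \langle dC\rangle$. I would build such a $D$ by transfinite recursion, enlarging the subfield on which $D$ is defined one generator at a time while maintaining $D\restrict{C}=0$ and $Da \neq 0$. At each stage the new generator is field-algebraic over the current field (so $D$ extends uniquely and consistently, char $0$ guaranteeing separability), field-transcendental (so $D$ may be assigned freely, in particular $0$), or of the form $\exp x$ with $x$ already present (so $D(\exp x) = \exp(x)\,Dx$ is forced). The only delicate point is consistency at the exponential steps: the forced value must respect every field-algebraic relation satisfied by $\exp x$. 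The crux is to show that a clash here, or any relation that would retroactively force $Da = 0$, can be assembled into a nonsingular Khovanskii system over $C$ trapping $a$, i.e.\ a witness that $a \in \ecl^F(C)$, contrary to hypothesis. This is the exponential analogue of ``an algebraic relation determines the derivative'', run backwards, and it is exactly where exponential transcendence of $a$ over $C$ is consumed: that hypothesis says no such trapping system exists, so the recursion never stalls and $Da \neq 0$ persists to all of $F$.

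Combining the two inclusions gives $\ecl^F = \cl^F$, and since $\cl^F$ was shown to be a pregeometry, so is $\ecl^F$. The dimension of a tuple $\bar a$ over $C$ is then $\dim_F \langle dC, d\bar a\rangle/\langle dC\rangle$, the rank of $d\bar a$ modulo $\langle dC\rangle$ in $\Omega$; comparing this rank with $\td(\bar a,\exp\bar a / C)$ and $\ldim(\bar a / C)$ should yield the weak Schanuel inequality for the dimension function advertised in the abstract.
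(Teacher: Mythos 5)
Your setup of $\cl^F$ via the universal module of exponential differentials, the pregeometry axioms by linear algebra in $\Omega$, and the inclusion $\ecl^F(C) \subs \cl^F(C)$ by differentiating a Khovanskii system all match the paper (Proposition~\ref{finite character}, Proposition~\ref{ecl subs cl}). The problem is the hard direction, where your sketch names the difficulty but does not supply the idea that resolves it.

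The ``delicate point'' you flag --- consistency of the forced values $\D(\exp x) = \exp(x)\,\D x$ with the field-algebraic relations satisfied by $\exp x$ --- is not a technicality to be discharged by running the chain rule backwards; it is the entire content of the theorem, and the paper resolves it only by invoking a theorem of Ax on the differential-algebraic Schanuel property (Theorem~\ref{Ax theorem}, used as Fact~\ref{Ax fact}). Concretely: a ``clash'' at an exponential step corresponds to an $F$-linear dependence among the logarithmic forms $\omega_i = de^{a_i}/e^{a_i} - da_i$ in $\Omega(F_2/F_1)$, and Ax's theorem is what converts such a dependence into a $\Z$-linear combination $b=\sum m_i a_i$ with $b$ and $e^b$ both algebraic over the base --- i.e.\ into a tuple of negative predimension, not directly into a nonsingular Khovanskii system trapping $a$. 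There is no elementary argument known that assembles the clash into such a system; your proposal at this point restates the theorem rather than proving it. Moreover, your transfinite recursion aims to extend the derivation to \emph{all} of $F$ one generator at a time, and E-derivations genuinely fail to extend across non-strong extensions (e.g.\ $\Rexp \subs \Cexp$, where $\delta(i/\R)=-1$), so an arbitrary enumeration of generators will stall. The paper avoids both problems by arguing the direction $a \in \cl^F(C) \Rightarrow a \in \ecl^F(C)$ directly: finite character gives a finitely generated $F_1$ with $da=0$ in $\Xi(F_1/C)$, chosen with $\ldim_\Q(A(F_1)/C)$ minimal; the minimality forces $\cl^{F_1}(C)=F_1$ (this step uses the extension theorem for derivations along \emph{strong} extensions, Theorem~\ref{derivations extend}, together with Corollary~\ref{Ax corollary}); then $\Xi(F_1/C)=0$, and the presentation of $\Xi(F_1/C)$ from Lemma~\ref{Xi char2} yields $n$ exponential polynomials whose Jacobian at $\bar a$ is nonsingular --- the Khovanskii system is read off from the vanishing of the differential module, never from a clash in a generator-by-generator construction. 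To complete your argument you would need, at minimum, the strong-extension machinery (Proposition~\ref{chain prop}, with tuples chosen to minimise $\delta$) and the input from Ax's theorem; as written the proposal has a genuine gap exactly where the theorem is hardest.
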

 Furthermore, in every exponential field $F$, the dimension function $\dim^F$ associated with $\ecl^F$ satisfies a weak form of the Schanuel property:
\begin{theorem}\label{Schanuel property}
  Suppose $C \subs F$ is $\ecl^F$-closed. Let $x_1,\ldots,x_n \in F$. Then
  \[\delta(\bar{x}/C) \leteq \td(\bar{x},\exp(\bar{x})/C) - \ldim_\Q(\bar{x}/C) \ge \dim^F(\bar{x}/C).\] 
\end{theorem}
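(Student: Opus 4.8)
The plan is to use Theorem~\ref{ecl is pregeometry} to replace $\ecl^F$ by its derivation description $\cl^F$, and then to compare the module of \emph{exponential} differentials with ordinary Kähler differentials. Write $y_i = \exp(x_i)$, $K = C(\bar{x},\bar{y})$, $t = \td(\bar{x},\bar{y}/C)$ and $\ell = \ldim_\Q(\bar{x}/C)$. Since $\cl^F$ is the pregeometry defined by exponential derivations, $\dim^F(\bar{x}/C)$ is the $F$-dimension of the image of $\langle dx_1,\dots,dx_n\rangle$ in the universal module $\Omega^{\exp}_{F/C}$ of exponential differentials over $C$ (equivalently, the dimension of the space of tuples $(\partial x_1,\dots,\partial x_n)$ realised by exponential derivations $\partial$ of $F$ killing $C$). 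The first step is to set up the natural comparison map $\phi\colon \Omega_{K/C}\otimes_K F \to \Omega^{\exp}_{F/C}$ induced by $K\hookrightarrow F$: by the very definition of an exponential derivation each element $\omega_i := dy_i - y_i\,dx_i$ maps to $0$, so the subspace $W := \langle\omega_1,\dots,\omega_n\rangle$ lies in $\ker\phi$. The source has $F$-dimension $t$ and, since $dy_i = \omega_i + y_i\,dx_i$, is spanned by $D := \langle dx_i\rangle$ together with $W$.

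Next I would extract the purely linear-algebraic bound. Writing $\dim^F(\bar{x}/C) = \dim_F\phi(D) = \dim_F D - \dim_F(D\cap\ker\phi)$ and using $W\subseteq\ker\phi$ together with the modular law and $D+W = \Omega_{K/C}\otimes_K F$ gives
\[ \dim^F(\bar{x}/C) \;\le\; \dim_F(D+W) - \dim_F W \;=\; t - \dim_F W. \]
I would then bound $\dim_F W$ using the hypotheses: because $\exp$ is a homomorphism and $C$ is $\ecl^F$-closed, so that $\exp(C)\subseteq C$, each $\Q$-linear relation $\sum_i m_i x_i \in C$ produces a multiplicative relation $\prod_i y_i^{m_i}\in C$; differentiating both and combining them yields an $F$-linear relation $\sum_i (m_i/y_i)\,\omega_i = 0$. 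Counting $\Q$-independent relations, these account for $n-\ell$ independent dependencies among the $\omega_i$, whence $\dim_F W \le \ell$. In the case where equality $\dim_F W = \ell$ holds the displayed inequality already reads $\dim^F(\bar{x}/C)\le t-\ell = \delta(\bar{x}/C)$, and the proof is complete.

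The hard part will be precisely the case $\dim_F W < \ell$. Here the $\omega_i$ are $F$-linearly dependent for reasons \emph{not} coming from additive dependence among the $x_i$, and the local bound $t-\dim_F W$ overshoots $\delta$; what must be shown is that in a genuine exponential field this is always compensated, so that $\dim^F(\bar{x}/C)\le\delta(\bar{x}/C)$ still holds. Concretely I expect to have to prove that such an extra dependence forces $\ker\phi$ to be strictly larger than $W$ — reflecting a true exponential-algebraic dependence, via an Ax-type rigidity of the global homomorphism $\exp$ — or equivalently that it is matched by an extra drop in $\td(\bar{x},\exp(\bar{x})/C)$. This compensation is the algebraic substitute for the o-minimality and analytic arguments Wilkie used over $\Rexp$ and $\Cexp$, and is the main obstacle. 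I would attack it by induction along an $\ecl^F$-independent basis of $\bar{x}$ over $C$, using the exchange and finite-character properties from Theorem~\ref{ecl is pregeometry} together with the additivity of both $\td$ and $\ldim_\Q$ to reduce the general statement to the exponentially transcendental case, where the derivation count is sharp.
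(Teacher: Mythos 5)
Your reduction to linear algebra in the space of differentials is sound, and it correctly isolates where all the content lies: you need $\dim_F W=\ell$, i.e.\ the forms $\omega_i=dy_i-y_i\,dx_i$ attached to a $\Q$-linear basis of $\bar x$ over $C$ must be $F$-linearly \emph{independent} modulo the $n-\ell$ relations you list. But you have not proved this, and the strategy you sketch for the remaining case $\dim_F W<\ell$ will not produce it. That independence statement is exactly the theorem of Ax (\cite[theorem~3]{Ax71}; see Fact~\ref{Ax fact}): if the images $\hat\omega_i$ are $F$-linearly dependent in $\Omega(F/C)$, then some nonzero $\Z$-combination $b=\sum m_ia_i$ has both $b$ and $e^b$ algebraic over $C$, which contradicts $C=\ecl^F(C)$ since $\acl\subs\ecl^F$. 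This is a genuinely deep differential-algebraic fact --- the paper imports it as a black box and explicitly flags it as the non-trivial input --- and it cannot be recovered by the elementary induction you propose.

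Concretely, two steps of your fallback plan fail. First, the ``exponentially transcendental case'' is not an easy base case: when the $x_i$ are $\cl^F$-independent over $C$ you have $n$ derivations $\D_j$ with $\D_jx_i=\delta_{ij}$, and pure field theory only gives $\td(\bar x,e^{\bar x}/C)\ge n$; the required bound is $2n$, and the extra $n$ coming from the equations $\D y_i=y_i\D x_i$ is precisely the Ax--Schanuel inequality, not a ``sharp derivation count.'' Second, the reduction by additivity of $\td$ and $\ldim_\Q$ along a basis would need a Schanuel-type inequality $\delta(\bar v/C\bar u)\ge 0$ over the intermediate base $C\bar u$, which is not $\ecl^F$-closed; over a non-closed base $\delta$ can be negative (that possibility is the whole point of Theorems~\ref{dim characterization} and~\ref{SC counterexamples}), so the induction does not close. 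The paper's proof avoids all of this by applying Ax's theorem directly with $\Delta=\EDer(F/C)$: choose $\D_1,\ldots,\D_m$ dual to a $\cl^F$-basis of $\bar x$ over $C$, note that $\rk(\D_jx_i)=m=\dim^F(\bar x/C)$, and read off the inequality (Corollary~\ref{Ax corollary}), using $\ecl^F=\cl^F$ from Theorem~\ref{ecl is pregeometry}. If you insert the citation of Ax's theorem at the point where you need $\dim_F W=\ell$, your argument becomes a correct and essentially equivalent variant of the paper's.
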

For any subsets $X,Y$ of a field $F$ (of characteristic zero), $\td(X/Y)$ means the transcendence
degree of the field extension $\Q(X,Y)/\Q(Y)$. Writing $\langle X \rangle_\Q$ for the $\Q$-linear span of $X$, $\ldim_\Q(X/Y)$ means the dimension of the quotient $\Q$-vector space $\langle X,Y
\rangle_\Q / \langle Y \rangle_\Q$.

In Hrushovski's constructions, the \emph{predimension function} $\delta$ characterises the dimension function. In this case $\delta$ does not directly give information about $\ecl^F(\emptyset)$, but $\delta$ and $\ecl^F(\emptyset)$ together determine the dimension function:
\begin{theorem}\label{dim characterization}
$\dim^F(\bar{x}) = \min \class{\delta(\bar{x}\bar{y}/\ecl^F(\emptyset))}{\bar{y} \subs F}$
\end{theorem}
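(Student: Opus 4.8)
The plan is to prove the two inequalities separately, writing $C_0 \leteq \ecl^F(\emptyset)$ and recalling that, since $\ecl^F$ is a pregeometry (Theorem \ref{ecl is pregeometry}), we have $\dim^F(\bar x) = \dim^F(\bar x/C_0)$. For the inequality $\dim^F(\bar x) \le \delta(\bar x\bar y/C_0)$, valid for every finite $\bar y \subs F$, I would apply the weak Schanuel property (Theorem \ref{Schanuel property}) with the $\ecl^F$-closed base $C_0$ to the tuple $\bar x\bar y$, giving $\delta(\bar x\bar y/C_0) \ge \dim^F(\bar x\bar y/C_0)$, and then invoke monotonicity of the pregeometry dimension, $\dim^F(\bar x\bar y/C_0) \ge \dim^F(\bar x/C_0)$. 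Taking the minimum over $\bar y$ yields $\dim^F(\bar x) \le \min\class{\delta(\bar x\bar y/C_0)}{\bar y \subs F}$; in particular the right-hand side is bounded below, and since $\delta$ takes integer values the minimum is attained.

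For the reverse inequality it suffices to exhibit a single $\bar y$ with $\delta(\bar x\bar y/C_0) \le \dim^F(\bar x)$. First I would observe that the closed set $E \leteq \ecl^F(C_0\bar x)$ is self-sufficient in $F$: applying Theorem \ref{Schanuel property} once more, $\delta(\bar z/E) \ge \dim^F(\bar z/E) \ge 0$ for every finite $\bar z \subs F$, so $E \strong F$. Next I would minimise $\delta(\bar x\bar y/C_0)$ over finite $\bar y \subs E$; the minimum exists, and, writing $d \leteq \dim^F(\bar x) = \dim^F(E/C_0)$, every such value is at least $d$ by the first paragraph. For a minimiser $\bar y^*$ the exponential subfield generated by $C_0$ and $\bar x\bar y^*$ is self-sufficient in $E$ — otherwise some $\bar z \subs E$ would satisfy $\delta(\bar z/C_0\bar x\bar y^*) < 0$ and $\bar y^*\bar z$ would give a smaller value — and hence, by transitivity of self-sufficiency together with $E \strong F$, it is self-sufficient in $F$. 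Since $\bar y^* \subs E = \ecl^F(C_0\bar x)$, we also have $\dim^F(\bar x\bar y^*/C_0) = d$.

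The argument is then completed by the key step: for a tuple $\bar w$ such that the exponential subfield generated by $C_0$ and $\bar w$ is self-sufficient in $F$, one has equality $\delta(\bar w/C_0) = \dim^F(\bar w/C_0)$ in the Schanuel inequality. Applied to $\bar w = \bar x\bar y^*$ this gives $\delta(\bar x\bar y^*/C_0) = d$, so $\min\class{\delta(\bar x\bar y/C_0)}{\bar y \subs F} \le d = \dim^F(\bar x)$, and combining with the first paragraph finishes the proof.

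I expect this key step to be the main obstacle. The inequality $\delta \ge \dim^F$ comes from restricting global exponential $C_0$-derivations of $F$ to the field generated by $\bar w$ and $\exp(\bar w)$, where the module of local exponential derivations has dimension exactly $\delta(\bar w/C_0)$, while the image of the global derivations computes $\dim^F(\bar w/C_0)$; equality amounts to showing that every local exponential derivation extends to a global one. Self-sufficiency is precisely the condition making this extension possible: building the extension one element at a time, an exponentially algebraic element forces a consistent derivative whereas an exponentially transcendental one leaves a free choice, and the self-sufficiency condition $\delta(\bar z/C_0\bar w) \ge 0$ guarantees that the relations imposed never over-determine the derivation. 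Turning this extension procedure into a proof, and checking that it meshes with the derivation framework already developed for Theorems \ref{ecl is pregeometry} and \ref{Schanuel property}, is the technical heart of the argument.
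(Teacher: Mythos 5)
Your strategy is essentially the paper's: one inequality from Theorem~\ref{Schanuel property} plus monotonicity of the pregeometry dimension, the other by realising enough E-derivations on a strong, finitely generated subextension and extending them to $F$. The step you single out as the main obstacle --- that $\delta(\bar w/C_0)=\dim^F(\bar w/C_0)$ when the partial E-subfield $F_0$ generated over $C_0$ by $\bar w$ is strong in $F$ --- does not need a new element-by-element extension procedure; it is exactly the combination of two results already proved in the paper. Since $C_0=\ecl^F(\emptyset)$ is closed, it is strong in $F_0$, so Fact~\ref{Ax fact} gives the linear independence of the forms $\hat{\omega}_i$ and hence $\dim_{F_0}\EDer(F_0/C_0)=\td(F_0/C_0)-\ldim_\Q(\bar w/C_0)=\delta(\bar w/C_0)$; then $F_0\strong F$ lets Theorem~\ref{derivations extend} extend all of these derivations to $F$, and since $F_0$ is generated over $C_0$ by $\bar w$ and $\exp(\bar w)$, the restriction map $\D\mapsto(\D w_1,\ldots)$ is injective on $\EDer(F_0/C_0)$, so its image has rank $\delta(\bar w/C_0)$ and $\dim^F(\bar w/C_0)\ge\delta(\bar w/C_0)$. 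With that supplied your argument closes correctly. The remaining difference is organisational: the paper minimises $\delta(\bar x\bar y/C_0)$ over all $\bar y\subs F$ at once, the addition formula turning minimality directly into $F_0\strong F$, whereas you first pass to $E=\ecl^F(C_0\bar x)$ and minimise inside it. Your detour is sound (and has the small merit of making explicit that the minimiser $\bar y^*$ does not raise $\dim^F$), but it is avoidable given the two cited results.
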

The full Schanuel property states that $\delta(\bar{x}) \ge 0$ for all $\bar{x}$, and under this condition we can replace $\ecl^F(\emptyset)$ by $\emptyset$ in the above theorems. In the complex case this is Schanuel's conjecture, which is considered out of reach. However, we can show:
\begin{theorem}\label{SC counterexamples}
  There are at most countably many essential counterexamples to Schanuel's conjecture.
\end{theorem}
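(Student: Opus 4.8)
The plan is to show that the failure of Schanuel's conjecture, if there is any, is confined to the countable set $\ecl^\C(\emptyset)$, and then to count within it. Throughout, a \emph{counterexample} is a finite tuple $\bar x$ from $\C$ with $\delta(\bar x) = \td(\bar x,\exp\bar x) - \ldim_\Q(\bar x) < 0$ (equivalently, a $\Q$-linearly independent $\bar x$ violating Schanuel's conjecture), and I would call such a counterexample \emph{essential} when $\bar x \subs \ecl^\C(\emptyset)$. The point of this definition is that a counterexample carrying genuinely transcendental, $\ecl$-generic data can be perturbed into a continuum of companions, whereas the exponentially algebraic counterexamples are rigid and are the ones worth counting. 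The theorem then reduces to two facts: that every failure of Schanuel's conjecture already manifests inside $\ecl^\C(\emptyset)$, and that $\ecl^\C(\emptyset)$ is countable.

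For the first fact I would apply Theorem~\ref{Schanuel property} with $C = \ecl^\C(\emptyset)$, which is $\ecl$-closed: it yields $\delta(\bar x/\ecl^\C(\emptyset)) \ge \dim^\C(\bar x/\ecl^\C(\emptyset)) \ge 0$ for every $\bar x$, so relative to this base Schanuel's conjecture holds. In particular every $\ecl$-closed set is self-sufficient, so the self-sufficient closure $\hull{\emptyset}$, namely the least self-sufficient subset of $\C$, which carries all of the negative predimension, is contained in $\ecl^\C(\emptyset)$. Hence if Schanuel's conjecture fails at all, i.e.\ $\emptyset$ is not self-sufficient, then $\hull{\emptyset}$ is a nonempty subset of $\ecl^\C(\emptyset)$ exhibiting an essential counterexample, and more generally the core of any counterexample lies there; it therefore suffices to bound the number of finite tuples from $\ecl^\C(\emptyset)$.

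The second fact is where the specific structure of $\C$ enters, and it is the main obstacle. Here I would use that membership $a \in \ecl^\C(\emptyset)$ is witnessed, in finitely many exponential-algebraic steps, by a coordinate of a solution of a nondegenerate system of exponential-polynomial equations with coefficients in $\CL{\Q}$, where nondegenerate means that the relevant Jacobian does not vanish at the solution. By the implicit function theorem for the (entire) exponential map, such a solution is isolated, so the full solution set of each system is discrete in $\C^n$ and hence countable; as there are only countably many such systems, $\ecl^\C(\emptyset)$ is countable. Consequently there are at most countably many finite tuples drawn from $\ecl^\C(\emptyset)$, and a fortiori at most countably many essential counterexamples. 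The delicate points to get right are the discreteness (Khovanskii-type) input, which is exactly what makes $\ecl^\C(\emptyset)$, and not the analogous closure in an arbitrary exponential field, countable, and the verification via Theorem~\ref{Schanuel property} that passing to the base $\ecl^\C(\emptyset)$ loses no essential counterexample.
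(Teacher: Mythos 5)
Your second half---the countability of $\ecl^\C(\emptyset)$ via the isolatedness of nondegenerate solutions of Khovanskii systems and the countability of the family of such systems---is exactly the paper's Remark~\ref{CCP} and is fine (modulo the small slip that the coefficients lie in the countable E-subring of $\C$ generated by $\emptyset$, not in $\CL{\Q}$). The genuine gap is in the first half. The theorem concerns the paper's notion of essential counterexample: a counterexample $\bar a$ with $\delta(\bar a)\le\delta(\bar c)$ for every tuple $\bar c$ from $\tuple{\bar a}_\Q$. You replace this with the definition ``$\bar a\subs\ecl^\C(\emptyset)$'', and the bridge between the two notions---that every counterexample which is essential in the paper's sense is contained in $\ecl^\C(\emptyset)$---is precisely the nontrivial content of the proof. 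You assert it (``the core of any counterexample lies there'') but do not prove it, and the inequality you actually extract from Theorem~\ref{Schanuel property}, namely $\delta(\bar x/\ecl^\C(\emptyset))\ge 0$, is too weak to yield it: setting $B=\tuple{\bar a}_\Q\cap\ecl^\C(\emptyset)$, it only gives $\delta(B)\le\delta(\bar a)$, which is perfectly consistent with $\bar a$ being essential while lying outside $\ecl^\C(\emptyset)$.

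What the paper uses is the full strength of Theorem~\ref{Schanuel property}: if $\bar a\not\subs\ecl^\C(\emptyset)$ then some coordinate of $\bar a$ is exponentially transcendental over $\emptyset$, so
\[\delta(\bar a/B)\ \ge\ \delta(\bar a/\ecl^\C(\emptyset))\ \ge\ \dim^\C(\bar a)\ \ge\ 1,\]
whence $\delta(B)=\delta(\bar a)-\delta(\bar a/B)<\delta(\bar a)$, contradicting essentiality since $B$ lies in $\tuple{\bar a}_\Q$. It is this \emph{strict} drop of $\delta$ on the part of the $\Q$-span inside $\ecl^\C(\emptyset)$ that forces essential counterexamples into the countable set. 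Your detour through the self-sufficient closure $\hull{\emptyset}$ suffers from the same problem (self-sufficiency of $\hull{\emptyset}$ again only gives $\delta(\cdot/\hull{\emptyset})\ge 0$), and in addition presupposes without proof that a least self-sufficient subset of $\C$ exists, which needs submodularity of $\delta$; this detour is avoidable, since $\ecl^\C(\emptyset)$ itself is already self-sufficient by Theorem~\ref{Schanuel property}. Once the strict inequality is supplied, your counting argument goes through and coincides with the paper's.
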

The notion of an essential counterexample must be explained. A counterexample to Schanuel's conjecture is a tuple $\bar{a} = (a_1,\ldots,a_n)$ of complex numbers such that $\delta(\bar{a}) < 0$. If there exists $\bar{a}$ such that $\delta(\bar{a}) < -1$ then for any $b \in \C$, $\delta(\bar{a}b) \le \delta(\bar{a}) + 1 < 0$, so there would be continuum-many counterexamples. However, if $\delta(\bar{a}b) = \delta(\bar{a}) + 1$ then $b$ is not contributing to the counterexample, so we want to exclude such cases. Note also that the value of $\delta(\bar{a})$ depends only on the $\Q$-linear span of $\bar{a}$. We define an \emph{essential counterexample} to be a counterexample $\bar{a}$ such that $\delta(\bar{a}) \le \delta(\bar{c})$ for any tuple $\bar{c}$ from the $\Q$-span of $\bar{a}$. Thus every counterexample contains an essential counterexample in its $\Q$-linear span.

To prove these theorems we construct derivations on exponential fields and show they can be extended to strong extensions of these fields. This seems to be a very non-trivial fact, depending on a theorem of Ax \cite{Ax71}. The techniques in this paper can probably
be extended to any collection of functions for which a similar result
is known. In particular, they should work for fields with formal
analogues of the Weierstrass $\wp$-functions, and the exponential maps
of other semiabelian varieties, using the analogues of Ax's theorem
given in \cite{TEDESV}.

\section{Exponential rings and fields}

 In this paper, a \emph{ring} $R =
\tuple{R;+,\cdot}$ is always commutative, with $1$. We write $\ga(R)$
for the additive group $\tuple{R;+}$ and $\gm(R)$ for the
multiplicative group $\tuple{R^\times;\cdot}$ of units of $R$.

\begin{defn}
  An \emph{exponential ring} (or \emph{E-ring}) is a ring $R$ equipped
  with a homomorphism $\exp_R$ (also written $\exp$, or $x \mapsto
  e^x$) from $\ga(R)$ to $\gm(R)$.

  We adopt the convention that an \emph{E-field} is an E-ring which is
  a field \emph{of characteristic zero}. Furthermore an
  \emph{E-domain} is an E-ring with no zero divisors which is also a
  $\Q$-algebra.
\end{defn}
Note that if $R$ is an E-ring of positive characteristic $p$ (that is,
$p$ is the least non-zero natural number such that
$\underbrace{1+\cdots+1}_p = 0$), then for each $x \in R$, $(e^x)^p =
e^0 = 1$. In particular, if $R$ is a domain then $p$ is prime and $0 =
(e^x)^p - 1 = (e^x - 1)^p$, so the exponential map is trivial. This is
the reason for the convention that E-domains and E-fields are always
of characteristic zero. It will be convenient for defining strong
embeddings later to insist that E-domains are \Q-algebras.

We will also need the notion of a \emph{partial E-domain}, where the exponential map is defined only on a subgroup of $\ga(R)$. To have the most useful notion of embedding, we give the formal definition as a two-sorted structure.
\begin{defn}
  A \emph{partial E-domain} is a two-sorted structure 
\[\tuple{R,A(R);+_R,\cdot,+_A,(q\cdot)_{q \in \Q},\alpha,\exp_R}\]
 where $\tuple{R;+_R,\cdot}$ is a domain, $\tuple{A(R);+_A,(q\cdot)_{q \in \Q}}$ is a \Q-vector space, $\ra{\tuple{A(R);+_A}}{\alpha}{\tuple{R;+_R}}$ is an injective homomorphism of additive groups, and $\ra{\tuple{A(R);+_A}}{\exp_R}{\tuple{R;\cdot}}$ is a homomorphism. We identify $A(R)$ with its image under $\alpha$, and write $+$ for both $+_A$ and $+_R$.
\end{defn}

We take the natural definitions of homomorphisms and embeddings of E-rings and partial E-domains. Thus a homomorphism of E-rings $\ra{R}{\phi}{S}$ is a ring homomorphism which preserves the exponential map. A homomorphism of partial E-domains is a ring homomorphism such that for each $x \in A(R)$, we have $\phi(x) \in A(S)$ and $\exp_S(\phi(x)) = \phi(\exp_R(x))$. The two-sorted definition of partial E-domains means that in an embedding $R \into S$, it is possible to have an element $x \in A(S)$ with $x,\exp_S(x) \in R$, but $x \notin A(R)$.

The category of E-rings is defined just by functions and equations, so there is a notion of a free E-ring. We write $\Z[X]^E$ for the free E-ring on a set of generators $X$, and call it the E-ring of \emph{exponential polynomials} in indeterminates $X$. Similarly for any E-ring $R$ we can consider the free E-ring extension of $R$ on a set of generators $X$, written $R[X]^E$, and call it the E-ring of exponential polynomials over $R$ (or with coefficients in $R$). See \cite{Macintyre96} for an explicit construction.

\section{Exponential algebraicity}

Exponential algebraic closure is the analogue in E-domains of the
notion of (relative) algebraic closure in pure domains. In a domain
$R$, an element $a$ is algebraic over a subring $B$ iff it satisfies a
non-trivial polynomial over $B$. In the E-domain context, we need a
slightly more complicated definition.

\begin{defn}\label{ecl defn}
  Let $R$ be an E-domain. A \emph{Khovanskii system} (of equations and
  inequations) consists of, for some $n \in \N$, exponential
  polynomials $f_1,\ldots,f_n \in R[X_1,\ldots,X_n]^E$, with equations
  \[f_i(x_1,\ldots,x_n) = 0 \quad \mbox{for } i=1,\ldots,n\] and the
  inequation
\[\begin{vmatrix}
  \frac{\partial f_1}{\partial X_1} & \cdots &\frac{\partial
    f_1}{\partial X_n}\\
  \vdots & \ddots & \vdots \\
  \frac{\partial f_n}{\partial X_1} & \cdots &\frac{\partial
    f_n}{\partial X_n} \end{vmatrix} (x_1,\ldots,x_n) \neq 0.\]
\end{defn}
In the analytic context of $\Rexp$ or $\Cexp$, the $f_i$ are analytic
functions, and the non-vanishing of the Jacobian means that $\bar{x}$
is an \emph{isolated} zero of the system of equations
$\bar{f}(\bar{x}) = 0$. However, the notion of a Khovanskii system is
purely algebraic, so we do not need any topology to make sense of it.

\begin{defn}
  If $B$ is an E-subring of $R$, define $a \in \ecl^R(B)$ iff
  there are $n \in \N$, $a_1,\ldots,a_n \in R$, and $f_1,\ldots,f_n
  \in B[X_1,\ldots,X_n]^E$ such that $a = a_1$ and
  $(a_1,\ldots,a_n)$ is a solution to the Khovanskii system given by
  the $f_i$.  If $C \subs F$ is any subset, let $\hat{C}$ be the
  E-subring of $R$ generated by $C$, and define $\ecl^R(C) =
  \ecl^R(\hat{C})$.

  We say that $\ecl^R(C)$ is the \emph{exponential algebraic closure} of $C$ in $R$. If $a \in \ecl^R(C)$ we say that $a$ is \emph{exponentially algebraic} over $C$ in $R$, and otherwise that it is \emph{exponentially transcendental} over $C$ in $R$. When $R$ is a partial E-domain, the same definition works but we must be careful only to apply exponential polynomial functions where they are defined.
\end{defn}
\begin{lemma}\label{ecl closure}
  If $R$ is a partial E-domain then $\ecl^R$ is a closure operator with finite character. That is, for any subsets $C,B$ of $R$ we have
\begin{itemize}
 \item $C \subs \ecl^R(C)$
 \item $B \subs C \implies \ecl^R(B) \subs \ecl^R(C)$
 \item $\ecl^R(\ecl^R(C)) = \ecl^R(C)$
 \item $\ecl^R(C) = \bigcup \class{\ecl^R(C_0)}{C_0 \mbox{ is a finite subset of } C}$.
\end{itemize}
Furthermore, the closure of any subset is an E-subring of $R$, and, if $R$ is a field, it is an E-subfield.
\end{lemma}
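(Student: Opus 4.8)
The plan is to verify the four closure-operator axioms in increasing order of difficulty and then deduce the final ``furthermore'' clause from them, with idempotence (equivalently, transitivity of exponential algebraicity) being the crux. The three easy axioms go as follows. For extensiveness, given $c \in \hat{C}$ the single equation $X_1 - c = 0$ is a Khovanskii system, its $1\times 1$ Jacobian being the constant $1$, with solution $c$; hence $\hat{C} \subs \ecl^R(C)$, and in particular $C \subs \ecl^R(C)$. Monotonicity is immediate, since $B \subs C$ gives $\hat{B} \subs \hat{C}$ and any system over $\hat{B}$ is a fortiori one over $\hat{C}$. For finite character, a witnessing system consists of finitely many exponential polynomials, each mentioning only finitely many coefficients from $\hat{C}$, and each such coefficient is built from finitely many elements of $C$; collecting these yields a finite $C_0 \subs C$ with the whole system defined over $\hat{C_0}$, so $a \in \ecl^R(C_0)$, while the reverse inclusion is monotonicity.

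The real work is idempotence: if $a \in \ecl^R(\ecl^R(C))$ then $a \in \ecl^R(C)$. Since the coefficients of a witnessing system lie in the E-ring generated by $\ecl^R(C)$, finite character lets me assume $a$ is witnessed by a Khovanskii system $f_1,\ldots,f_n$ whose coefficients are exponential polynomials in finitely many parameters $d_1,\ldots,d_m \in \ecl^R(C)$. Writing $\tilde{f}_i(\bar{Y},\bar{X})$ for the exponential polynomials obtained by replacing each $d_j$ by a fresh variable $Y_j$, we have $f_i(\bar{X}) = \tilde{f}_i(\bar{d},\bar{X})$. For each $j$ I choose a Khovanskii system $g^{(j)}_1,\ldots,g^{(j)}_{k_j}$ over $\hat{C}$ witnessing $d_j \in \ecl^R(C)$, with solution $\bar{b}^{(j)}$ in variables $\bar{Z}_j$ whose first coordinate equals $d_j$. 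I then assemble a single square system over $\hat{C}$, in the variables $\bar{Z}_1,\ldots,\bar{Z}_m,\bar{X}$, consisting of all the equations $g^{(j)}_l(\bar{Z}_j) = 0$ together with $\tilde{f}_i(Z_{1,1},\ldots,Z_{m,1},\bar{X}) = 0$, with $Z_{j,1}$ playing the role of $Y_j$; it is solved by $(\bar{b}^{(1)},\ldots,\bar{b}^{(m)},\bar{a})$.

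The crucial point, which I expect to be the main obstacle, is that the Jacobian of the combined system is block lower-triangular: the $g$-equations do not involve $\bar{X}$, and $g^{(j)}$ involves only $\bar{Z}_j$, so its determinant factors as $\det(\partial\tilde{f}/\partial\bar{X}) \cdot \prod_j \det(\partial g^{(j)}/\partial\bar{Z}_j)$. At the solution each $\det(\partial g^{(j)}/\partial\bar{Z}_j)$ is nonzero by the choice of the $g^{(j)}$, while $\partial\tilde{f}_i/\partial X_l$ agrees with $\partial f_i/\partial X_l$ evaluated at $\bar{a}$, so $\det(\partial\tilde{f}/\partial\bar{X})$ is the nonzero Jacobian of the original system for $a$. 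Thus the combined Jacobian is nonzero, the assembled system is a genuine Khovanskii system over $\hat{C}$, and its first coordinate $a$ lies in $\ecl^R(C)$. In the partial E-domain case one must additionally check that no new application of $\exp$ outside $A(R)$ is introduced: evaluating the combined exponential polynomials at the solution reproduces exactly the $\exp$-evaluations already present in the original systems, which were defined by hypothesis. This bookkeeping, together with confirming that the factorization of the Jacobian survives the substitution $Y_j \mapsto Z_{j,1}$, are the only delicate points.

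Finally, the E-subring claim follows formally from the axioms. For $a,b \in \ecl^R(C)$, extensiveness gives $\widehat{\{a,b\}} \subs \ecl^R(\{a,b\})$, while monotonicity and idempotence give $\ecl^R(\{a,b\}) \subs \ecl^R(\ecl^R(C)) = \ecl^R(C)$; since $\widehat{\{a,b\}}$ already contains $a+b$, $ab$, $-a$, $e^a$, $0$ and $1$, the set $\ecl^R(C)$ is closed under all E-ring operations. When $R$ is a field and $a \neq 0$ lies in $\ecl^R(C)$, the element $a^{-1}$ solves the Khovanskii system $aX_1 - 1 = 0$ with Jacobian $a \neq 0$, placing $a^{-1}$ in $\ecl^R(\ecl^R(C)) = \ecl^R(C)$, so the closure is an E-subfield.
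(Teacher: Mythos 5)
Your proof is correct; the paper itself dismisses this lemma as ``a straightforward exercise,'' and your argument --- composing Khovanskii systems and using the block-triangular structure of the combined Jacobian to get idempotence, with the E-subring and inverse claims then following formally --- is exactly the intended one. The only cosmetic repair needed is to reorder the variables of the combined system so that $\bar{X}$ (and hence $a=a_1$) comes first, which changes the Jacobian determinant only by a sign.
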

\begin{proof}
  A straightforward exercise.
\end{proof}

It is also easy to see that if $R \subs S$ are E-domains and $C \subs
R$ then $\ecl^R(C) \subs \ecl^S(C) \cap R$. However, unlike in the
case of algebraic closure, this inclusion may be strict.

\begin{remark}\label{CCP}
  On $\Rexp$ or $\Cexp$, there can only be countably many isolated zeros of a system of equations, so it follows that there are only countably many exponentially algebraic numbers. It is, of course, a difficult problem to show that any number is even transcendental, and as far as I know there are no real or complex numbers which are known to be exponentially transcendental. It seems likely that the Liouville numbers are all exponentially transcendental, but that may be difficult to prove.
\end{remark}

\section{Derivations and differentials}

  Derivations play an important role in transcendence theory for pure
  fields. The analogous notion for exponential fields was first
  exploited by Wilkie. Here we define exponential derivations and
  differentials, in analogy with the theory of differentials in
  commutative algebra.

\begin{defn}
  Let $R$ be a partial E-ring, and $M$ an $R$-module. (There is no
  exponential structure on $M$; it is just a module in the usual
  sense.) A \emph{derivation} from $R$ to $M$ is a map $\ra{R}{\D}{M}$
  such that for each $a,b \in R$,
  \begin{itemize}
  \item $\D (a+b) = \D a + \D b$, and
  \item $\D (ab) = a \D b + b \D a$
  \end{itemize}
  It is an \emph{exponential derivation} or \emph{E-derivation} iff also for each $a \in A(R)$ we have $\D (\exp(a)) = \exp(a) \D a$. 

  Write $\Der(R, M)$ for the set of all derivations from $R$ to $M$, and $\EDer(R,M)$ for the set of all E-derivations from $R$ to $M$. For any subset $C$ of $R$, we write $\Der(R/C,M)$ and $\EDer(R/C,M)$ for the sets of derivations (E-derivations) which vanish on $C$. It is easy to see that these are $R$-modules. 
\end{defn}

We have the \emph{universal derivation} $\ra{R}{d}{\Omega(R/C)}$, where $\Omega(R/C)$ is the $R$-module generated by symbols $\class{dr}{r \in R}$, subject only to the relations given by $d$ being a derivation and the relations $dc=0$ for each $c \in C$. Similarly there is a universal E-derivation, $\ra{R}{d}{\Xi(R/C)}$, where $\Xi(R/C)$ is the quotient of $\Omega(R/C)$ defined by the extra relations of an E-derivation. The universal property is that if $\ra{R}{\D}{M}$ is any E-derivation vanishing on $C$ then there is a unique $R$-linear map $\D^*$ such that
\[\begin{diagram}[height=2em,width=2em]
   R & \rTo^d & \Xi(R/C)\\
     & \rdTo_\D & \dTo>{\D^*}\\
     & & M
  \end{diagram}\]
commutes.

An important special case is when $M = R$. In this case, we write $\Der(R/C)$ for $\Der(R/C,R)$ and $\EDer(R/C)$ for $\EDer(R/C,R)$. When $C = \emptyset$ we also write $\Der(R)$ and $\EDer(R)$.

Unlike in the case of pure fields, it is not easy to see what
the derivations on a given E-field are. The reason for this is that a
derivation on an E-field $F_1$ may not extend to an extension E-field
$F_2 \sups F_1$. This phenomenon also occurs for pure fields, but only
in positive characteristic and only in one way, when giving new
$p^{\mathrm{th}}$ roots.
\begin{example}
  Consider the extension of pure fields $\mathbb{F}_p(t) \subs
  \mathbb{F}_p(s)$, where $t = s^p$. On $\mathbb{F}_p(t)$ we have the
  derivation $\frac{\D}{\D t}$. But if $\D$ is any derivation on
  $\mathbb{F}_p(s)$ then $\D t = \D(s^p) = p s^{p-1}\D s = 0$. So $\D$
  is not an extension of $\frac{\D}{\D t}$.
\end{example}

In pure fields of characteristic zero, if $F_1 \subs F_2$ then $\dim
\Der(F_2/F_1) = \td(F_2/F_1)$. Furthermore, $a \in \acl(F_1)$ iff
$\td(F_1(a)/F_1) = 0$ iff every derivation on $F_2$ which vanishes on
$F_1$ also vanishes at $a$. By analogy, we define a closure operator
$\cl^R$ on an E-domain $R$ as follows.
\begin{defn}
  For $R$ a partial E-domain, $C \subs R$ and $a \in R$, define $a\in \cl^R(C)$ iff  for every $\D \in
  \EDer(R/C)$, $\D a = 0$.
\end{defn}
By the universal property of $\Xi(R/C)$, $a\in \cl^R(C)$ iff $dx=0$ in $\Xi(R/C)$.

\begin{lemma}
  The operator $\cl^R$ is a closure operator satisfying the exchange property. Furthermore the closure of any subset is an E-subring, and, if $R$ is a field, an E-subfield.
\end{lemma}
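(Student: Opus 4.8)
The plan is to read everything off the definition that $a \in \cl^R(C)$ iff $\D a = 0$ for all $\D \in \EDer(R/C)$, using that $C \mapsto \EDer(R/C)$ is order-reversing. Extensivity $C \subs \cl^R(C)$ is immediate, since every $\D \in \EDer(R/C)$ vanishes on $C$ by definition. Monotonicity holds because $B \subs C$ gives $\EDer(R/C) \subs \EDer(R/B)$, so an element annihilated by every derivation vanishing on $B$ is \emph{a fortiori} annihilated by every derivation vanishing on $C$. For idempotence the key observation is $\EDer(R/\cl^R(C)) = \EDer(R/C)$: the inclusion $\subs$ is clear as $C \subs \cl^R(C)$, and conversely any $\D \in \EDer(R/C)$ vanishes on all of $\cl^R(C)$ by the very definition of that set, so the two modules, and hence the two closures, agree.

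For the algebraic structure I would write $\cl^R(C) = \bigcap_{\D \in \EDer(R/C)} \ker \D$ and note that the kernel of a single additive Leibniz map is a subring: it contains $1$ (from $\D 1 = \D(1 \cdot 1) = 2\,\D 1$) and hence $\Q$ (as $R$ is a characteristic-zero domain), and it is closed under sums and, via $\D(ab) = a\,\D b + b\,\D a$, under products. The exponential structure is exactly where the E-derivation hypothesis enters: if $a \in \cl^R(C) \cap A(R)$ then $\D(\exp a) = \exp(a)\,\D a = 0$ for every $\D \in \EDer(R/C)$, so $\exp a \in \cl^R(C)$; together with the fact that the preimage in $A(R)$ of the $\Q$-subalgebra $\cl^R(C)$ is a $\Q$-subspace, this shows $\cl^R(C)$ is an E-subring. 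When $R$ is a field, $0 = \D(a \cdot a^{-1}) = a\,\D(a^{-1})$ forces $\D(a^{-1}) = 0$ for $a \in \cl^R(C) \minus \{0\}$, so $\cl^R(C)$ is closed under inverses and is an E-subfield.

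The substantive step is the exchange property, and here I would pass to the module of E-differentials via the characterization $a \in \cl^R(C)$ iff $da = 0$ in $\Xi(R/C)$. Enlarging the base from $C$ to $C \cup \{b\}$ imposes only the single extra relation $db = 0$, so the universal property gives a surjection $\Xi(R/C) \onto \Xi(R/(C\cup\{b\}))$ with kernel exactly the submodule $R \cdot db$, whence $\Xi(R/(C\cup\{b\})) \iso \Xi(R/C)/R\cdot db$ and $a \in \cl^R(C\cup\{b\})$ iff $da \in R\cdot db$ in $\Xi(R/C)$. Assuming now $a \in \cl^R(C\cup\{b\}) \minus \cl^R(C)$, I get $da = r \cdot db$ for some $r \in R$ with $da \neq 0$, so $r \neq 0$. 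To conclude $b \in \cl^R(C \cup \{a\})$, take any $\D \in \EDer(R/(C\cup\{a\}))$ with induced $R$-linear map $\D^* \colon \Xi(R/C) \to R$; then $0 = \D a = \D^*(da) = \D^*(r \cdot db) = r \cdot \D b$, and as $R$ is a domain with $r \neq 0$ this forces $\D b = 0$, giving $b \in \cl^R(C\cup\{a\})$.

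I expect the real obstacle to be the structural input underpinning this last paragraph: the identification of the kernel of $\Xi(R/C) \onto \Xi(R/(C\cup\{b\}))$ as \emph{precisely} $R\cdot db$ (the E-differential analogue of the second exact sequence for differentials), together with the characterization of $\cl^R$ through vanishing of $da$ in $\Xi(R/C)$. Granting these two facts, exchange drops out cleanly, and — pleasantly — the domain axiom alone suffices to cancel the factor $r$, so no passage to the fraction field or localization is required.
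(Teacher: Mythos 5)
Your proof is correct, but the exchange step takes a genuinely different route from the paper's. The paper argues directly with derivations, in contrapositive form: assuming $a \in \cl^R(Cb)$ and $b \notin \cl^R(Ca)$, it picks $\D \in \EDer(R/Ca)$ with $\D b \neq 0$ and, for an arbitrary $\D' \in \EDer(R/C)$, forms the corrected derivation $\D'' = \D' - (\D' b)\D$, which vanishes on $Cb$ and agrees with $\D'$ at $a$; since $a \in \cl^R(Cb)$ this forces $\D' a = 0$, so $a \in \cl^R(C)$. You instead dualize, working in $\Xi(R/C)$ via the conormal-type isomorphism $\Xi(R/(C\cup\{b\})) \iso \Xi(R/C)/R\,db$ and reading exchange off the relation $da = r\,db$. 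Both arguments are clean, and yours makes the "one extra relation" mechanism transparent; but note that your route leans on the equivalence ``$a \in \cl^R(C')$ iff $da = 0$ in $\Xi(R/C')$'' in the nontrivial direction, namely that $R$-valued functionals on $\Xi(R/C')$ separate $da$ from $0$. The paper does assert this equivalence (just after the definition of $\cl^R$), and it is automatic when $R$ is a field, but for a partial E-domain that is not a field an $R$-module can have nonzero elements (e.g.\ torsion) killed by every functional into $R$, so that direction is not purely the universal property. The paper's derivation-level manipulation sidesteps this entirely and needs only that $R$ is a domain (to cancel $\D b \neq 0$), which is the same cancellation you use for $r$. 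The closure-operator axioms and the E-subring/E-subfield verifications match the paper's, which simply declares them immediate; your explicit computations ($\D 1 = 0$, the Leibniz rule for products and inverses, $\D(\exp a) = \exp(a)\D a$) are the right ones.
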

\begin{proof}
  It is immediate that $C \subs \cl^R(C)$, if $C_1 \subs C_2$ then
  $\cl^R(C_1) \subs \cl^R(C_2)$, and that $\cl^R(\cl^R(C)) =
  \cl^R(C)$. It is also immediate that $\cl^R(C)$ is closed under the
  E-ring operations and under taking multiplicative inverses. For
  exchange, suppose that $a \in \cl^R(Cb)$ but $b \notin
  \cl^R(Ca)$. Then there is an E-derivation $\D$ which vanishes on $C$
  such that $\D a = 0$ and $\D b = 1$. Let $\D' \in \EDer(R/C)$, and
  let $\D'' = \D' - (\D' b) \D$. Then $\D' a = \D'' a$, but $\D'' b =
  0$ and $a \in \cl^R(Cb)$, so $\D'' a = 0$. Hence $\D' a = 0$, and so
  $a \in \cl^R(C)$.
\end{proof}
Wilkie explicitly builds finite character into the definition of $\cl^R$ to give a pregeometry. In fact this is not necessary, as finite character holds already.

\begin{prop}\label{finite character}
  Suppose $R$ is a partial E-domain, $C \subs R$ and $a \in \cl^R(C)$. Then there is a finite subset $C_0$ of $C$ and a finitely generated partial E-subring $R_0$ of $R$ such that $a \in \cl^{R_0}(C_0)$. Furthermore, $\cl^R$ has finite character, and is a pregeometry.
\end{prop}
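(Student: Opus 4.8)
The plan is to reduce membership in $\cl^R$ to a statement about the module $\Xi(R/C)$ and then exploit that this module is defined by generators and relations, so that any relation which holds is a consequence of only finitely many of the defining ones. First I would recall the reformulation noted just after the definition of $\cl^R$: $a \in \cl^R(C)$ if and only if $da = 0$ in $\Xi(R/C)$. By construction $\Xi(R/C)$ is the quotient $F/N$, where $F$ is the free $R$-module on the symbols $\class{dr}{r \in R}$ and $N$ is the $R$-submodule generated by the relations $d(r+s)-dr-ds$ and $d(rs)-r\,ds-s\,dr$ (for $r,s \in R$), together with $dc$ (for $c \in C$) and $d(\exp x)-\exp(x)\,dx$ (for $x \in A(R)$). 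The hypothesis $da = 0$ in $\Xi(R/C)$ says precisely that $da \in N$, and since $N$ is the submodule \emph{generated} by these relations, $da$ is a finite $R$-linear combination $\sum_i \lambda_i \rho_i$ of them, with $\lambda_i \in R$ and each $\rho_i$ one of the listed relations.

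The key point is that this witnessing equation involves only finitely much data. I would let $C_0 \subs C$ be the finite set of those $c$ for which a relation $dc$ occurs in the sum, and let $R_0$ be the partial E-subring of $R$ generated by $a$, by the coefficients $\lambda_i$, by $C_0$, and by all the ring elements $r,s$ and $A$-sort elements $x$ occurring in the $\rho_i$. Then $R_0$ is finitely generated, $C_0 \subs R_0$, each invoked $\rho_i$ is a defining relation of $\Xi(R_0/C_0)$, and each $\lambda_i$ lies in $R_0$; hence the very same identity $da = \sum_i \lambda_i \rho_i$ shows that $da = 0$ already in $\Xi(R_0/C_0)$, that is, $a \in \cl^{R_0}(C_0)$. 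This proves the first assertion.

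For finite character I must pass from $\cl^{R_0}$ back to $\cl^R$ with the same finite parameter set $C_0$. The subtlety is that the targets of the relevant derivations differ ($R_0$ versus $R$), so I would argue through the universal property of $\Xi(R_0/C_0)$. Regard $\Xi(R/C_0)$ as an $R_0$-module via the inclusion $R_0 \into R$; then the restriction to $R_0$ of the universal E-derivation $d\colon R \to \Xi(R/C_0)$ is an E-derivation $R_0 \to \Xi(R/C_0)$ vanishing on $C_0$, and so factors through a unique $R_0$-linear map $\Xi(R_0/C_0) \to \Xi(R/C_0)$ sending $da \mapsto da$. Since $da = 0$ on the left, $da = 0$ in $\Xi(R/C_0)$, i.e.\ $a \in \cl^R(C_0)$ with $C_0$ finite; together with monotonicity this is exactly finite character. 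Finally, $\cl^R$ is a closure operator satisfying exchange by the preceding lemma and now has finite character, so it is a pregeometry.

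The step I expect to be the crux is the bookkeeping in the middle: one must check that the finitely many elements harvested from the witnessing equation really do generate a partial E-subring in which every invoked relation remains a \emph{defining} relation of $\Xi(R_0/C_0)$ — in particular that the $A$-sort elements $x$ and their exponentials $\exp(x)$ are captured, and that the coefficients $\lambda_i$ lie in $R_0$ — so that the identity transports verbatim. Everything else is formal; indeed, the whole argument is an instance of the principle that a functor defined by finite presentations commutes with the directed union $R = \bigcup R_0$ of finitely generated partial E-subrings.
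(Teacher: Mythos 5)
Your argument is correct. It reaches the same conclusion as the paper but by a different mechanism: the paper runs a model-theoretic compactness argument (it introduces a formal language with constants for finite sums $\sum r_i\,ds_i$, axiomatizes the $R$-module $\Xi(R/C)$ together with the E-derivation relations, notes $T \vdash da=0$, and extracts a finite subtheory $T_0$ from which it reads off $C_0$ and $R_0$), whereas you work directly with the presentation of $\Xi(R/C)$ as a free module modulo a generated submodule $N$ and use the elementary fact that membership in $N$ is witnessed by a finite $R$-linear combination of the defining relations. The two are instances of the same finiteness principle -- as you note, the functor $C \mapsto \Xi(R/C)$ is presented by a directed system of finite data -- but your version avoids setting up a formal language and keeps the whole argument inside commutative algebra, at the cost of the explicit bookkeeping you flag (checking that the harvested elements, including the $A$-sort elements and their exponentials and the coefficients internal to each relation such as $r,s$ in $d(rs)-r\,ds-s\,dr$, all land in the finitely generated partial E-subring $R_0$, so that the witnessing identity transports verbatim). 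Your passage from $a\in\cl^{R_0}(C_0)$ to $a\in\cl^{R}(C_0)$ via the universal property of $\Xi(R_0/C_0)$ mapping to $\Xi(R/C_0)$ is also a clean justification of a step the paper asserts without comment; alternatively you could observe that the same identity $da=\sum_i\lambda_i\rho_i$ already exhibits $da=0$ in $\Xi(R/C_0)$, since each $\rho_i$ is among its defining relations. Both routes are sound.
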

\begin{proof}
  We have $da = 0$ in $\Xi(R/C)$. We use a simple compactness argument. Let $L$ be a formal language with a constant symbol for each finite sum $\sum r_i ds_i$ with the $r_i, s_i \in R$. Let $T$ be the $L$-theory consisting of all instances of the axioms saying that these symbols represent elements of the $R$-module $\Xi(R/C)$ that is, the axioms of an $R$-module, the axioms saying that $d$ is an E-derivation, and the axioms $dc = 0$ for each $c \in C$. Then $T \proves da = 0$. Hence by compactness there is a finite subtheory $T_0$ of $T$ such that $T_0 \proves da = 0$. Let $C_0$ be the subset of $C$ consisting of those $c$ such that the axiom $dc=0$ appears in $T_0$. Let $R_0$ be the partial E-subring of $R$ generated by all the $r \in R$ which occur in some axiom of $T_0$. Then we must have $da = 0$ in $\Xi(R_0/C_0)$, and also in $\Xi(R/C_0)$. Thus $a \in \cl^{R_0}(C_0)$, and $a \in \cl^{R}(C_0)$, which gives finite character of $\cl^R$. We have shown that $\cl^R$ satisfies the other axioms of a pregeometry.
\end{proof}

We now begin to relate our closure operators $\cl^R$ and $\ecl^R$.
\begin{lemma}\label{Xi char}
  $\Xi(R/C)$ can also be characterized as the $R$-module generated by symbols $\class{dr}{r \in R}$ subject to the relations 
  \[\sum_{i=1}^n \frac{\partial f}{\partial X_i}(\bar{r}) dr_i = 0 \qquad \qquad (*)\]
  for each $f \in C[\bar{X}]^E$ and tuple $\bar{r}$ from $R$ such that $f(\bar{r}) = 0$.
\end{lemma}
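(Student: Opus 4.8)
The plan is to show that the two presentations define the same $R$-module, by checking that their defining relations cut out the same submodule of the free $R$-module on the symbols $\class{dr}{r \in R}$; equivalently, I would construct mutually inverse $R$-linear maps via the relevant universal properties. Write $M$ for the candidate module --- the $R$-module generated by $\class{dr}{r \in R}$ subject to the relations $(*)$ --- and let $\ra{R}{d'}{M}$ be the tautological map $r \mapsto dr$. There are two things to prove. First, that $d'$ is an E-derivation vanishing on $C$; by the universal property of $\Xi(R/C)$ this yields an $R$-linear map $\ra{\Xi(R/C)}{\phi}{M}$ fixing the generators. Second, that the universal E-derivation $\ra{R}{d}{\Xi(R/C)}$ satisfies the relations $(*)$; since $M$ is presented by generators and these relations, this yields an $R$-linear map $\ra{M}{\psi}{\Xi(R/C)}$ fixing the generators. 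As $\phi$ and $\psi$ both fix the generating symbols $dr$, they are mutually inverse, giving $M \cong \Xi(R/C)$.

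The first direction is routine, since every defining relation of an E-derivation vanishing on $C$ is a special instance of $(*)$. Applying $(*)$ to $f = X_1 + X_2 - X_3$ at $\bar r = (a,b,a+b)$ gives additivity $d(a+b) = da + db$; to $f = X_1 X_2 - X_3$ at $(a,b,ab)$ gives the Leibniz rule; to $f = \exp(X_1) - X_2$ at $(a,\exp(a))$, valid for $a \in A(R)$ where $\exp$ is defined, gives $d(\exp a) = \exp(a)\,da$; and to $f = X_1 - c$ at $(c)$ for $c \in C$ gives $dc = 0$. Hence $d'$ is an E-derivation vanishing on $C$, and $\phi$ exists.

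The second direction is the heart of the matter and rests on a chain rule for E-derivations: for any E-derivation $\ra{R}{\D}{N}$ vanishing on $C$, any $f \in C[\bar X]^E$, and any tuple $\bar r$ from $R$ at which $f$ is defined, $\D(f(\bar r)) = \sum_{i} \frac{\partial f}{\partial X_i}(\bar r)\,\D r_i$. Taking $\D = d$ and any $\bar r$ with $f(\bar r) = 0$ makes the left-hand side $d(0) = 0$, which is precisely the relation $(*)$ holding in $\Xi(R/C)$; this produces $\psi$. I expect this chain rule to be the main obstacle, and would prove it by induction on the construction of $f$ in the free E-ring: the base cases are the generators $X_j$ (immediate) and constants lying in the coefficient ring, on which $\D$ vanishes because it kills $C$ and hence the entire E-subring generated by $C$. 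The steps for $f = g + h$ and $f = gh$ combine additivity and the Leibniz rule for $\D$ with the corresponding rules for the formal partials, and the step for $f = \exp(g)$ combines the E-derivation rule $\D(\exp(g(\bar r))) = \exp(g(\bar r))\,\D(g(\bar r))$ with the defining rule $\frac{\partial \exp(g)}{\partial X_i} = \exp(g)\,\frac{\partial g}{\partial X_i}$. The only genuine care required is the bookkeeping in the partial E-domain setting: one must check at each stage that $g(\bar r) \in A(R)$, so that $\exp$ is applied only where defined, which is ensured by the assumption that $f(\bar r)$ is defined.
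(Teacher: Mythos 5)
Your proof is correct and takes essentially the same approach as the paper, which disposes of the lemma in two sentences: the basic relations axiomatizing E-derivations are special instances of $(*)$ (the paper also cites $f = X_1+X_2-X_3$ for additivity), and conversely the relations $(*)$ follow from the E-derivation axioms by the chain rule. You have simply filled in the explicit specializations, the universal-property bookkeeping, and the induction on the structure of $f$ (including the check that $\exp$ is only applied on $A(R)$) that the paper leaves implicit.
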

\begin{proof}
  The relation $d(x+y) = dx + dy$ comes from $f = X_1+X_2-X_3$, and similarly for the other basic relations axiomatizing E-derivations. Conversely, the relations $(*)$ follow from the axioms of E-derivations by the chain rule.
\end{proof}

\begin{prop}\label{ecl subs cl}
Let $R$ be a partial E-domain and $C$ a subset of $R$. Then $\ecl^R(C) \subs \cl^R(C)$.
\end{prop}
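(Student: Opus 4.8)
The plan is to prove this, the easy half of Theorem \ref{ecl is pregeometry}, directly from the definition of $\cl^R$ by E-derivations valued in $R$: I will transport the relations supplied by Lemma \ref{Xi char} along the canonical map $\D^*$ and then invert the Jacobian. Concretely, it suffices to show that every $\D \in \EDer(R/C)$ vanishes at each element of $\ecl^R(C)$.

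So fix $a \in \ecl^R(C)$, witnessed by a Khovanskii system: elements $a = a_1, a_2, \ldots, a_n \in R$ and exponential polynomials $f_1, \ldots, f_n \in \hat{C}[\bar{X}]^E$ with $f_i(\bar{a}) = 0$ for every $i$, whose Jacobian $J \leteq \left(\frac{\partial f_i}{\partial X_j}(\bar{a})\right)_{i,j}$ satisfies $\det J \neq 0$ in $R$. Let $\D \in \EDer(R/C)$ be arbitrary, and let $\D^* : \Xi(R/C) \to R$ be the $R$-linear map with $\D^* \circ d = \D$ furnished by the universal property of $\Xi(R/C)$. By Lemma \ref{Xi char}, each equation $f_i(\bar{a}) = 0$ gives the relation $\sum_{j=1}^n \frac{\partial f_i}{\partial X_j}(\bar{a})\, da_j = 0$ in $\Xi(R/C)$; applying the $R$-linear map $\D^*$ and using $\D^*(da_j) = \D a_j$ yields
\[\sum_{j=1}^n \frac{\partial f_i}{\partial X_j}(\bar{a})\, \D a_j = 0 \qquad (i = 1, \ldots, n)\]
in $R$. (The restriction built into the $\ecl$ and $\Xi$ constructions ensures every exponential occurring here is applied within $A(R)$, so these relations are legitimate in the partial setting.)

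In other words, the column vector $(\D a_1, \ldots, \D a_n)^{\mathrm{T}}$ lies in the kernel of $J$ over $R$. Multiplying on the left by the adjugate of $J$ and using $\operatorname{adj}(J)\, J = (\det J)\, I_n$ gives $(\det J)\, \D a_j = 0$ for each $j$. Since $R$ is a domain and $\det J \neq 0$, we conclude $\D a_j = 0$ for all $j$, and in particular $\D a = \D a_1 = 0$. As $\D$ was an arbitrary element of $\EDer(R/C)$, this shows $a \in \cl^R(C)$, and hence $\ecl^R(C) \subs \cl^R(C)$.

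I do not expect a real obstacle: this is the elementary direction, and the whole argument is the chain rule followed by Cramer's rule. The only points deserving attention are the transfer from $\Xi(R/C)$ to $R$ through $\D^*$ — which is what lets me avoid any torsion issue in the module $\Xi(R/C)$, where the adjugate trick would only give $(\det J)\,da_j = 0$ — and the use of the integral-domain hypothesis on $R$ to pass from $(\det J)\,\D a_j = 0$ to $\D a_j = 0$. The genuinely hard work lies in the converse inclusion $\cl^R \subs \ecl^R$, which rests on Ax's theorem.
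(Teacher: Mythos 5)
Your proof is correct and follows essentially the same route as the paper: apply Lemma \ref{Xi char} to turn each equation $f_i(\bar{a})=0$ into the linear relation $\sum_j \frac{\partial f_i}{\partial X_j}(\bar{a})\,\D a_j = 0$, then invert the nonsingular Jacobian over the fraction field (the paper clears denominators where you use the adjugate, a cosmetic difference) and use that $R$ is a domain to conclude $\D a_j = 0$.
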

\begin{proof}
  Both closures of $C$ are E-subrings of $R$, so we may assume that $C$ is an E-subring.
  Suppose $a_1,\ldots,a_n \in \ecl^R(C)$, as witnessed by being a
  solution to the Khovanskii system formed by $f_1,\ldots,f_n \in
  C[X_1,\ldots,X_n]^E$. Suppose $\D \in \Der(R/C)$, and let $J$ be
  the Jacobian matrix $J = 
  \begin{pmatrix} 
    \frac{\partial f_1}{\partial X_1} & \cdots & \frac{\partial
      f_1}{\partial X_n} \\
    \vdots & \ddots & \vdots \\
    \frac{\partial f_n}{\partial X_1} & \cdots & \frac{\partial
      f_n}{\partial X_n}
  \end{pmatrix}(\bar{a})$. Then by lemma~\ref{Xi char},
  $J \begin{pmatrix} \D a_1 \\ \vdots \\ \D a_n \end{pmatrix} = 0$.
  Since $\bar{a}$ solves the Khovanskii system, the determinant $|J| \neq 0$, so $J$ has an inverse with coefficients in the field of fractions of $R$. Clearing denominators, for some nonzero $r \in R$ the matrix $rJ^{-1}$ has coefficients in $R$. Then $r \begin{pmatrix} \D a_1 \\ \vdots \\ \D a_n \end{pmatrix} = 0$ and hence $\D a_i = 0$ for each $i$, as $R$ is a domain. So each $a_i$ lies in $\cl^R(C)$.
\end{proof}

It will be useful to have a stronger form of lemma~\ref{Xi char} for finitely generated extensions of partial E-fields, where we consider only relations between the chosen generators.
\begin{lemma}\label{Xi char2}
  Suppose $C \into F$ is an inclusion of partial E-fields, that $a_1,\ldots,a_n$ is a $\Q$-linear basis for $A(F)$ over $A(C)$, and that $F$ is generated as a field by $A(F) \cup \exp(A(F))$. Then $\Xi(F/C)$ is the $F$-vector space generated by $da_1,\ldots,da_n$ subject to the relations 
  \[\sum_{i=1}^n \frac{\partial f}{\partial X_i}(\bar{a}) da_i = 0 \qquad \qquad (*)\]
 for each $f \in C[\bar{X}]^E$ such that $f(\bar{a}) = 0$.
\end{lemma}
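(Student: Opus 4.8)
The plan is to recognise the presented module as $\Xi(F/C)$ by building an explicit inverse to the obvious comparison map; concretely, this comes down to constructing an E-derivation on $F$ with prescribed values on the basis $\bar a$. Write $\Lambda$ for the $F$-vector space presented by generators $\delta_1,\ldots,\delta_n$ and the relations $(*)$ evaluated at $\bar a$. By Lemma~\ref{Xi char} the elements $da_1,\ldots,da_n$ of $\Xi(F/C)$ satisfy exactly these relations, so $\delta_i\mapsto da_i$ defines an $F$-linear map $\pi\colon\Lambda\to\Xi(F/C)$, and it remains to show $\pi$ is an isomorphism.

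First I would check surjectivity, i.e.\ that the $da_i$ generate $\Xi(F/C)$. Let $R_0$ be the partial E-subring of $F$ generated by $C\cup\{a_1,\ldots,a_n\}$, equivalently the image of the evaluation homomorphism $\ev_{\bar a}\colon C[\bar X]^E\to F$ sending $X_i\mapsto a_i$. Since $\bar a$ is a $\Q$-basis of $A(F)$ over $A(C)$ we have $A(F)=A(C)+\langle\bar a\rangle_\Q\subs R_0$ and hence also $\exp(A(F))\subs R_0$; as $F$ is generated as a field by $A(F)\cup\exp(A(F))$, this gives $F=\mathrm{Frac}(R_0)$. Now for any $f\in C[\bar X]^E$ the chain rule used in Lemma~\ref{Xi char} yields $d(f(\bar a))=\sum_i\frac{\partial f}{\partial X_i}(\bar a)\,da_i$, so $dr\in\langle da_1,\ldots,da_n\rangle_F$ for all $r\in R_0$, and the quotient rule extends this to all of $F$. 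Hence $\pi$ is onto.

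The heart of the argument is the inverse. By the universal property of $\Xi(F/C)$ it suffices to produce an E-derivation $D\colon F\to\Lambda$ vanishing on $C$ with $Da_i=\delta_i$; the induced map $\Xi(F/C)\to\Lambda$ then sends $da_i\mapsto\delta_i$ and, since the $da_i$ generate, is a two-sided inverse of $\pi$. Using the formal partial E-derivations $\frac{\partial}{\partial X_i}$ on the free E-ring $C[\bar X]^E$, I would define $D$ on $R_0$ by $D(f(\bar a))=\sum_i\frac{\partial f}{\partial X_i}(\bar a)\,\delta_i$. The essential point is well-definedness: if $f(\bar a)=g(\bar a)$ then $h=f-g$ satisfies $h(\bar a)=0$, and the required identity $\sum_i\frac{\partial h}{\partial X_i}(\bar a)\,\delta_i=0$ is precisely the defining relation of $\Lambda$ indexed by $h$. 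That $D$ is a $C$-linear E-derivation on $R_0$ then follows from $\ev_{\bar a}$ being an E-ring homomorphism together with the Leibniz and exponential identities satisfied by the $\frac{\partial}{\partial X_i}$; finally $D$ extends uniquely to $F=\mathrm{Frac}(R_0)$ by the quotient rule, and because $A(F)\subs R_0$ the identity $D(\exp a)=\exp(a)\,Da$ only ever has to be verified on $R_0$, where it already holds.

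The main obstacle is exactly this well-definedness check, since it is the only place where the restriction of the relations to the single tuple $\bar a$ (rather than to all tuples $\bar r$, as in Lemma~\ref{Xi char}) is used, and it is what gives the stronger statement its content. A secondary technical nuisance, which I would address but not dwell on, is the partiality of $\exp$: one must work only with exponential polynomials that are genuinely evaluable at $\bar a$, and confirm that $A(F)\subs R_0$, so that every instance of the exponential derivation rule is invoked where $\exp$ is actually defined.
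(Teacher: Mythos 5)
Your proposal is correct and is essentially the paper's own argument in dual packaging: the paper shows directly that every defining relation of $\Xi(F/C)$ (the exponential relations by $\Q$-linearity, the algebraic relations by clearing denominators to an exponential polynomial $h$ with $h(\bar{a})=0$) is a consequence of the relations $(*)$ at $\bar{a}$, while you verify the same facts by constructing an E-derivation $F \to \Lambda$ into the presented module and invoking the universal property, with the same spanning argument. Your version has the minor merit of making explicit the well-definedness check that the paper's relation-chasing leaves implicit, but the key observations are identical.
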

\begin{proof}
  The differentials $d e^{a_i}$ satisfy $d e^{a_i} = e^{a_i} da_i$, so are in the span of the $da_i$. $F$ is algebraic over $C(\bar{a},e^{\bar{a}})$, so these differentials span $\Omega(F/C)$, hence they certainly span $\Xi(F/C)$. We must show that the basic relations axiomatizing E-derivations follow from the relations $(*)$. All of the exponential relations $de^b = e^b db$ follow from those for the $a_i$ by $\Q$-linearity. We are left with the algebraic relations between elements of $F$. Suppose 
\[ p\left(\frac{f_1(\bar{a})}{g_1(\bar{a})},\ldots,\frac{f_m(\bar{a})}{g_m(\bar{a})}\right) = 0 
\qquad\qquad (\dagger)\]
with $p \in C[Y_1,\ldots,Y_m]$ and the $f_i,g_i$ exponential polynomials, with $g_i(\bar{a}) \neq 0$. Clearing the denominators, we get an exponential polynomial $h(\bar{X})$ such that
($\dagger$) is equivalent to $h(\bar{a}) = 0$. So 
\[d\left[  p(f_1(\bar{a})/g_1(\bar{a}),\ldots,f_m(\bar{a})/g_m(\bar{a}))\right]  = 0 \iff d\left[ h(\bar{a})\right]  = 0\]
but this is iff $\sum_{i=1}^n \frac{\partial h}{\partial X_i}(\bar{a}) da_i = 0$ which is of the form $(*)$. So the relations of the form $(*)$ are enough to characterize $\Xi(F/C)$.
\end{proof}

\section{Strong extensions}

We need the following theorem of J.~Ax.
\begin{theorem}[{\cite[theorem~3]{Ax71}}]\label{Ax theorem}
  Let $F$ be a field of characteristic 0, let $\Delta$ be a set of
  derivations on $F$, and let $C = \bigcap_{\D \in
    \Delta}\ker \D$ be the field of constants. Suppose
  $x_1,\ldots,x_n, y_1,\ldots,y_n \in F$ satisfy $\D y_i = y_i \D x_i$
  for each $i = 1,\ldots, n$ and each $\D \in \Delta$. Then
  \[\td(\bar{x},\bar{y}/C) \ge \ldim_\Q(\bar{x}/C) + \rk
  \begin{pmatrix}\D x_i \end{pmatrix}_{\D \in \Delta, i=1,\ldots,n}\]
\end{theorem}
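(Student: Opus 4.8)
The plan is to prove this differential analogue of Schanuel's conjecture (the Ax--Schanuel theorem) in two movements: an elementary reduction that disposes of $\Q$-linear dependencies among the $x_i$, followed by a geometric argument inside the algebraic group $\ga^n \times \gm^n$ that carries the real content.

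First I would reduce to the case where $x_1,\ldots,x_n$ are $\Q$-linearly independent modulo $C$, by induction on $n$. If there is a nontrivial relation $\sum_i q_i x_i = c \in C$ with $q_i \in \Z$, say $q_n \neq 0$, then applying each $\D \in \Delta$ gives $\sum_i q_i \D x_i = 0$, so the column $(\D x_n)_\D$ is an $F$-linear combination of the others and the rank of $(\D x_i)$ is unchanged when $x_n$ is dropped. Moreover the logarithmic derivative of $\prod_i y_i^{q_i}$ is $\sum_i q_i \D x_i = 0$ for every $\D$, so $\prod_i y_i^{q_i} \in C$; hence $y_n$ is algebraic over $C(y_1,\ldots,y_{n-1})$ and $\td(\bar x,\bar y/C)$, like $\ldim_\Q(\bar x/C)$, is unaffected by deleting the pair $(x_n,y_n)$. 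Thus the inequality for the shorter tuple yields it for the original, and we are reduced to showing
\[\td(\bar x, \bar y / C) \ge n + r, \qquad r = \rk(\D x_i)_{\D \in \Delta,\, i},\]
under the assumption $\ldim_\Q(\bar x/C) = n$.

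For the geometric core I would form the locus $V = \loc(\bar x, \bar y/C)$, an irreducible subvariety of $\ga^n \times \gm^n$ defined over $C$, with $\dim V = \td(\bar x,\bar y/C)$, and take the generic point $p = (\bar x, \bar y)$. Since $V$ is defined over $C = \bigcap_\D \ker \D$, differentiating the equations of $V$ shows that each vector $\tau_\D = (\D x_i, \D y_i)_i$ lies in the Zariski tangent space $T_p V$. The hypothesis $\D y_i = y_i \D x_i$ places every $\tau_\D$ in the graph-of-$\exp$ distribution $W_p = \class{(\bar u, \bar v)}{v_i = y_i u_i}$, and since the projection $W_p \to \ga^n$ to the additive coordinates is an isomorphism carrying $\tau_\D$ to $(\D x_i)_i$, the $\tau_\D$ span a subspace $S \subseteq T_p V \cap W_p$ whose dimension is exactly $r$. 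This already gives $\dim V \ge r$; the whole difficulty is the remaining $+n$.

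The hard part is to convert $\Q$-linear independence of the $x_i$ into the sharper bound $\dim V \ge n + r$. Here I would invoke the structure of connected algebraic subgroups of $\ga^n \times \gm^n$: in characteristic zero there are no nonzero homomorphisms between $\ga$ and $\gm$, so every such subgroup is a product $L \times T$ of a linear subspace $L \subseteq \ga^n$ with a subtorus $T \subseteq \gm^n$ cut out by monomials, and the Lie algebra of any such subgroup respects the additive/multiplicative splitting of $\operatorname{Lie}(\ga^n \times \gm^n)$. The subspace $S$, being diagonal, respects no such splitting, which is precisely the algebraic shadow of the transcendence of $\exp$. The crux of Ax's argument is that a deficiency $\dim V < n + r$ forces the tangency along $S$ to integrate, within the Zariski closure, to a coset of a proper subgroup $L \times T$ with $T \neq \gm^n$; such a coset imposes a nontrivial monomial relation $\prod_i y_i^{m_i} \in C$ with $m_i \in \Z$, whose logarithmic derivative gives $\sum_i m_i \D x_i = 0$ for all $\D$, that is $\sum_i m_i x_i \in C$ --- contradicting the independence of the $x_i$ modulo $C$. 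Making this integration step precise is exactly the content of Ax's theorem and is where all the work lies; I expect it to require either Ax's original power-series computation or a Kolchin-closure argument controlling how the non-algebraic distribution $S$ sits inside the algebraic variety $V$.
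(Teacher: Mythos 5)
First, a point of comparison: the paper does not prove this statement at all --- it is imported verbatim as Theorem~3 of Ax's 1971 paper and used as a black box --- so there is no internal proof to measure yours against. Judged on its own terms, your attempt gets the skeleton right but does not prove the theorem. The reduction to $\ldim_\Q(\bar{x}/C)=n$ is correct and cleanly done: from $\sum_i q_i x_i = c \in C$ you get $\sum_i q_i \D x_i = 0$ for every $\D$, hence $\D\bigl(\prod_i y_i^{q_i}\bigr)=0$, hence $\prod_i y_i^{q_i}\in C$, and all three quantities in the inequality survive deletion of the pair $(x_n,y_n)$. The tangent-space observation that the vectors $\tau_\D$ give $\dim V \ge r$ is also fine, as is the structure theory of connected algebraic subgroups of $\ga^n\times\gm^n$ in characteristic zero.

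The gap is that the inequality $\dim V \ge n+r$ \emph{is} the theorem, and your proposal explicitly defers it: saying that a deficiency ``forces the tangency along $S$ to integrate to a coset of a proper subgroup $L\times T$'' is a restatement of the Ax--Schanuel conclusion in geometric language, not an argument for it. Nothing in your text explains why a linear condition on tangent vectors at the single generic point $p$ should propagate to a group-theoretic constraint on all of $V$; that propagation is exactly where Ax does real work. His route is through K\"ahler differentials rather than tangent spaces: one shows that if $\td(\bar{x},\bar{y}/C) < n + r$ then the logarithmic differentials $dy_i/y_i - dx_i$ become $F$-linearly dependent in $\Omega(F/C)$, and then a separate argument (in Ax's paper a reduction to the function-field/power-series setting; in later treatments a Kolchin- or foliation-theoretic one) converts that dependence into an integer relation $\prod_i y_i^{m_i}\in C$, contradicting independence. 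That extraction step is the entire content of the theorem --- indeed the present paper isolates precisely this dependence-implies-torsion-coset statement as fact~\ref{Ax fact} and again cites Ax rather than proving it. So: correct normalization, correct identification of where the difficulty lives, but the core argument is missing.
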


\begin{cor}\label{Ax corollary}
Let $F$  be an E-field, and suppose $C \subs F$ is $\cl^F$-closed. Let
$x_1,\ldots,x_n \in F$. Then
\[\td(\bar{x},\exp(\bar{x})/C) - \ldim_\Q(\bar{x}/C) \ge
\dim^F(\bar{x}/C)\] where $\dim^F(\bar{x}/C)$ is the dimension in the sense
of the pregeometry $\cl^F$.
\end{cor}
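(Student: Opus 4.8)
The plan is to apply Ax's theorem (Theorem~\ref{Ax theorem}) with the derivation set $\Delta = \EDer(F/C)$ of all E-derivations on $F$ vanishing on $C$, and with $y_i = \exp(x_i)$. First I would check that $\Delta$ has the required field of constants: by the definition of $\cl^F$ we have $\bigcap_{\D \in \Delta} \ker \D = \cl^F(C)$, and since $C$ is $\cl^F$-closed this is exactly $C$. Moreover $C = \cl^F(C)$ is an E-subfield of the characteristic-zero field $F$, so the hypotheses of Theorem~\ref{Ax theorem} are met. For each $\D \in \Delta$ and each $i$ the E-derivation identity gives $\D(\exp x_i) = \exp(x_i)\,\D x_i$, that is $\D y_i = y_i \D x_i$, so Ax's theorem yields
\[\td(\bar x, \exp(\bar x)/C) \ge \ldim_\Q(\bar x/C) + \rk (\D x_i)_{\D \in \Delta,\ i=1,\ldots,n}.\]
It therefore remains to identify the rank of the matrix $(\D x_i)$ with $\dim^F(\bar x/C)$.

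The heart of the argument is the claim that $\rk (\D x_i)_{\D \in \Delta,\ i} = \dim^F(\bar x/C)$. To prove this I would invoke the universal property of $\Xi(F/C)$: since $F$ is a field, an E-derivation $\D \in \EDer(F/C)$ is the same datum as an $F$-linear functional $\D^* \in \operatorname{Hom}_F(\Xi(F/C),F)$, with $\D x_i = \D^*(dx_i)$. Thus the rows of the matrix are precisely the images of the tuple $(dx_1,\ldots,dx_n)$ under functionals, and the row space is the image of the evaluation map $\operatorname{Hom}_F(\Xi(F/C),F) \to F^n$, $\ell \mapsto (\ell(dx_1),\ldots,\ell(dx_n))$. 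Let $V = \langle dx_1,\ldots,dx_n\rangle_F \subs \Xi(F/C)$. Because $V$ is a finite-dimensional subspace of an $F$-vector space it is a direct summand, so every functional on $V$ extends to $\Xi(F/C)$; hence the restriction map $\operatorname{Hom}_F(\Xi(F/C),F) \to V^*$ is surjective, and the row space equals the image of $V^* \to F^n$, which has dimension $\dim_F V$. So $\rk(\D x_i) = \dim_F V$.

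Finally I would match $\dim_F V$ with the pregeometry dimension. Using the characterization $a \in \cl^F(B) \iff da = 0$ in $\Xi(F/B)$, together with the fact that $\Xi(F/(C \cup S))$ is the quotient of $\Xi(F/C)$ by the $F$-span of $\{ds : s \in S\}$, one sees that $x_i \in \cl^F(C \cup \{x_j : j \ne i\})$ precisely when $dx_i$ lies in the span of the remaining $dx_j$ in $\Xi(F/C)$. Hence a subtuple of $\bar x$ is $\cl^F$-independent over $C$ iff the corresponding differentials are $F$-linearly independent, so $\dim^F(\bar x/C) = \dim_F V$. Substituting the two resulting identities into Ax's inequality and rearranging gives the corollary.

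The main obstacle is the middle step: translating the rank of the derivation-evaluation matrix into a dimension of differentials, and in particular justifying that arbitrary $F$-functionals on the finite-dimensional subspace $V$ extend to E-derivations on all of $\Xi(F/C)$, so that the full module $\EDer(F/C)$ (rather than some larger space of functionals) already realizes the rank. By contrast, verifying that the field of constants of $\Delta$ equals $C$, while essential for the hypotheses of Theorem~\ref{Ax theorem}, is an immediate consequence of $\cl^F$-closedness.
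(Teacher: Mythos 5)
Your proposal is correct and follows the paper's proof in its essentials: both apply Ax's theorem with $\Delta = \EDer(F/C)$ and $y_i = \exp(x_i)$, use $\cl^F$-closedness of $C$ to identify the constant field, and then identify $\rk(\D x_i)$ with $\dim^F(\bar{x}/C)$. The paper does that last step more tersely, by choosing a $\cl^F$-basis $x_{i_1},\ldots,x_{i_m}$ of $\bar{x}$ over $C$ and derivations dual to it with $\D_j x_{i_k}=\delta_{jk}$, whereas you route the same duality through $\Xi(F/C)$ and the span of the $dx_i$; the two are interchangeable.
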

\begin{proof}
  Taking $\Delta = \EDer(F/C)$ and $y_i = \exp(x_i)$, all the differential
  equations $\D y_i = y_i \D x_i$ for $\D \in \Delta$ are satisfied. Also $C = \bigcap_{\D \in \Delta}\ker \D$ because $C$ is $\cl^F$-closed. We can find $x_{i_1},\ldots,x_{i_m}$ among
  the $x_i$, where $m =
  \dim^F(\bar{x}/C)$, and derivations $\D_1,\ldots,\D_m \in \Delta$
  such that $\D_j x_{i_k} = \delta_{jk}$, the Kronecker delta. Thus $\rk
  \begin{pmatrix}\D x_i \end{pmatrix}_{\D \in \Delta, i=1,\ldots,n} =
  m$. Apply Ax's theorem.
\end{proof}

Now let $R$ be any partial E-domain.  For any tuple $\bar{x}$ and subset $B$ of $A(R)$, we define
\[\delta(\bar{x}/B) = \td(\bar{x},\exp(\bar{x})/B,\exp(B)) -
\ldim_\Q(\bar{x}/B) \] which, following Hrushovski, we call the \emph{predimension} of
$\bar{x}$ over $B$.  Note that if $B = \bar{b}$ is finite then we have
the useful addition formula $\delta(\bar{x}/\bar{b}) =
\delta(\bar{x}\bar{b}/0) - \delta(\bar{b}/0)$.

\begin{defn}
  We say an embedding $R_1 \into R_2$ of partial E-domains is
  \emph{strong}, and write $R_1 \strong R_2$, iff for every tuple
  $\bar{x}$ from $A(R_2)$, we have $\delta(\bar{x}/A(R_1)) \ge 0$.

  More generally, if $B$ is any subset of $A(R)$ for a partial
  E-domain $R$, we say that $B$ is strong in $R$, and write $B \strong
  R$, iff for every tuple $\bar{x}$ from $A(R)$, we have
  $\delta(\bar{x}/B) \ge 0$.
\end{defn}

Not all E-field extensions are strong. For example, $\Rexp \subs
\Cexp$ is not strong, since $\delta(i/\R) = \td(i, e^i/\R) -
\ldim_\Q(i/\R) = 0 - 1 = -1$. This example can be generalized to show
that any proper algebraic extension, or even one of finite
transcendence degree, cannot be strong.
\begin{lemma}\label{no finite td}
  If $F_0 \strong F$ is a strong extension of total E-fields and
  $\td(F/F_0)$ is finite, then $F = F_0$.
\end{lemma}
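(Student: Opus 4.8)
The plan is to turn strongness into a bound of the form $\ldim_\Q(F/F_0) \le \td(F/F_0)$, and then to observe that a single element transcendental over $F_0$ already makes the $\Q$-linear dimension infinite, so that a finite transcendence degree forces $F = F_0$.

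First I would unwind the definition of strongness in the total case. Since $F$ and $F_0$ are total E-fields we have $A(F) = F$ and $A(F_0) = F_0$, and moreover $\exp(F_0) \subs F_0$, so for any finite tuple $\bar{x}$ from $F$ the predimension over $F_0$ simplifies to $\delta(\bar{x}/F_0) = \td(\bar{x},\exp(\bar{x})/F_0) - \ldim_\Q(\bar{x}/F_0)$. The hypothesis $F_0 \strong F$ gives $\delta(\bar{x}/F_0) \ge 0$, that is $\ldim_\Q(\bar{x}/F_0) \le \td(\bar{x},\exp(\bar{x})/F_0)$. All entries of $\bar{x}$ and of $\exp(\bar{x})$ lie in $F$, so the right-hand side is at most $\td(F/F_0)$. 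Since this inequality holds for \emph{every} finite tuple $\bar{x}$ simultaneously, taking the supremum over finite tuples yields $\ldim_\Q(F/F_0) \le \td(F/F_0)$, which is finite by hypothesis.

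Next I would derive a contradiction from the assumption $F \neq F_0$. If $F \neq F_0$ then $\ldim_\Q(F/F_0) \ge 1$, so by the bound just established $\td(F/F_0) \ge 1$, and hence there is some $y \in F$ transcendental over $F_0$. The powers $y, y^2, y^3, \ldots$ are then $\Q$-linearly independent over $F_0$: any relation $\sum_{i=1}^m q_i y^i \in F_0$ with rational $q_i$ not all zero would rearrange to a nontrivial algebraic equation for $y$ over $F_0$, contradicting transcendence. Thus $\ldim_\Q(F/F_0)$ is infinite, contradicting the finite bound. Therefore $F = F_0$.

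I do not expect a genuine obstacle here; the argument is short once the right inequality is in hand. The single step that carries the content is the first one, translating the sign condition defining strongness into the uniform bound $\ldim_\Q(F/F_0) \le \td(F/F_0)$; the only point needing slight care is that this bound is obtained for all finite tuples at once, so that it transfers to the full $\Q$-linear dimension rather than merely to individual tuples. Everything afterwards is elementary field theory.
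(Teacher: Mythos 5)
Your proposal is correct and follows essentially the same route as the paper: both arguments use strongness (with $y_i = \exp(x_i)$ landing back in $F$) to extract the bound $\ldim_\Q(F/F_0) \le \td(F/F_0)$, and both then observe that a proper extension of finite $\Q$-linear dimension must be algebraic over $F_0$ --- your version via the powers of a transcendental element, the paper's by choosing a $\Q$-linear basis and noting the same fact --- forcing $\td(F/F_0) = 0$ and hence $F = F_0$. No gaps.
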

\begin{proof}
  Suppose $F$ is a proper, strong extension of $F_0$. Choose a
  $\Q$-linear basis $\class{b_i}{i \in I}$ for $F$ over $F_0$. Then
  \[\td(F/F_0) \ge \td\left(\class{b_i, e^{b_i}}{i \in I}/F_0\right) \ge \card{I}\]
  which means that $I$ is finite. But then $I = \emptyset$ or $F$ is a
  finite extension of $\Q$, in particular algebraic, so $\td(F/F_0) =
  0$, in which case $I = \emptyset$ anyway. Thus $F = F_0$.
\end{proof}
However, if we allow partial exponential fields, every strong
extension can be split up into a chain of strong extensions of finite
transcendence degree. To show this we need some basic properties of strong
extensions, which are left as a straightforward exercise.
\begin{lemma}\label{AEC axioms}
For ordinals $\alpha$, let $R_\alpha$ be partial E-domains.
  \begin{enumerate}
  \item[(i)] The identity $R_1 \into R_1$ is strong.
  \item[(ii)]  If $R_1 \strong R_2$ and $R_2 \strong R_3$ then $R_1 \strong
    R_3$. (That is, the composite of strong extensions is strong.)
  \item[(iii)] Suppose $\lambda$ is an ordinal, $(R_\alpha)_{\alpha <
      \lambda}$ is a $\lambda$-chain of strong extensions (that is for
    each $\alpha \le \beta < \lambda$ there is a strong extension
    $f_{\alpha,\beta}: R_\alpha \strong R_\beta$ and for all $\alpha
    \le \beta \le \gamma$, $f_{\beta,\gamma} \circ f_{\alpha,\beta} =
    f_{\alpha,\gamma}$ and $f_{\alpha,\alpha}$ is the identity on
    $R_\alpha$), and $R$ is the union of the chain. Then $R_\alpha
    \strong R$ for each $\alpha$.
  \item[(iv)] Suppose $(R_\alpha)_{\alpha < \lambda}$ is a $\lambda$-chain of
    strong extensions with union $R$, and that $R_\alpha \strong S$
    for each $\alpha$. Then $R \strong S$.
  \end{enumerate}\qed
\end{lemma}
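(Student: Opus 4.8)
The plan is to treat (i), (iii) and (iv) as bookkeeping built on finite character of transcendence degree and of $\Q$-linear dimension, and to put the real work into the transitivity statement (ii). The one tool I would set up first is the addition formula for the predimension over an arbitrary, possibly infinite, base $B \subseteq A(R)$: for finite tuples $\bar{u},\bar{v}$ from $A(R)$,
\[\delta(\bar{u}\bar{v}/B) = \delta(\bar{u}/\bar{v}\cup B) + \delta(\bar{v}/B).\]
This is immediate from the tower laws for $\td$ and for $\ldim_\Q$, all quantities being finite because $\bar u,\bar v$ are.

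Statement (i) is immediate: if $\bar{x}$ is a tuple from $A(R_1)$ then $\bar{x}$ and $\exp(\bar{x})$ lie in the field generated by $A(R_1)\cup\exp(A(R_1))$ and $\bar{x}\subseteq A(R_1)$, so both terms of $\delta(\bar{x}/A(R_1))$ vanish. For (ii) I would prove the sharper statement that shrinking the base to a strong subdomain cannot decrease the predimension: assuming $R_1\strong R_2$ and $R_2\strong R_3$, for every finite tuple $\bar{x}$ from $A(R_3)$ I claim $\delta(\bar{x}/A(R_1))\ge\delta(\bar{x}/A(R_2))$, and the right side is $\ge 0$ by $R_2\strong R_3$. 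The device is a finite tuple $\bar{w}$ from $A(R_2)$ absorbing exactly the linear dependence of $\bar{x}$ on $A(R_2)$: take $\bar{w}$ to be a $\Q$-basis of $\langle\bar{x}\rangle_\Q\cap A(R_2)$ modulo $\langle\bar{x}\rangle_\Q\cap A(R_1)$, so each $w_j=\sum_i q_{ji}x_i$ is a $\Q$-combination of the $x_i$ lying in $A(R_2)$. Then $w_j$ is rational over $\bar{x}$ and $\exp(w_j)=\prod_i\exp(x_i)^{q_{ji}}$ is algebraic over $\Q(\exp(\bar{x}))$, so $\bar{w},\exp(\bar{w})$ add neither transcendence degree nor $\Q$-linear dimension over $\bar{x}\cup A(R_1)$; hence $\delta(\bar{w}/\bar{x}\cup A(R_1))=0$ and the addition formula gives $\delta(\bar{x}/A(R_1))=\delta(\bar{x}\bar{w}/A(R_1))$. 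Splitting this the other way yields $\delta(\bar{x}\bar{w}/A(R_1))=\delta(\bar{x}/\bar{w}\cup A(R_1))+\delta(\bar{w}/A(R_1))$, where $\delta(\bar{w}/A(R_1))\ge 0$ because $\bar{w}$ is a tuple from $A(R_2)$ and $R_1\strong R_2$. For the remaining term, the choice of $\bar{w}$ forces $\ldim_\Q(\bar{x}/\bar{w}\cup A(R_1))=\ldim_\Q(\bar{x}/A(R_2))$ — by the modular law $\langle\bar{x}\rangle_\Q\cap\langle\bar{w}\cup A(R_1)\rangle_\Q=\langle\bar{x}\rangle_\Q\cap A(R_2)$ — while the transcendence-degree term only decreases when the base is enlarged from $\bar{w}\cup A(R_1)$ to $A(R_2)$; hence $\delta(\bar{x}/\bar{w}\cup A(R_1))\ge\delta(\bar{x}/A(R_2))\ge 0$, completing (ii).

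For (iii) and (iv) I would reduce to a finite stage. Any tuple $\bar{x}$ from $A(R)=\bigcup_\alpha A(R_\alpha)$ is finite, so it lies in some $A(R_\beta)$ with $\alpha\le\beta<\lambda$; since transcendence degree and $\Q$-linear dimension are absolute, $\delta(\bar{x}/A(R_\alpha))$ is the same computed in $R_\beta$ or in $R$, and it is $\ge 0$ because $R_\alpha\strong R_\beta$ is part of the chain data, giving (iii). For (iv), given a finite tuple $\bar{z}$ from $A(S)$, finite character of $\td$ and of $\ldim_\Q$ produces a single $\alpha<\lambda$ with $\td(\bar{z},\exp(\bar{z})/A(R_\alpha),\exp(A(R_\alpha)))=\td(\bar{z},\exp(\bar{z})/A(R),\exp(A(R)))$ and $\ldim_\Q(\bar{z}/A(R_\alpha))=\ldim_\Q(\bar{z}/A(R))$; for that $\alpha$ we get $\delta(\bar{z}/A(R))=\delta(\bar{z}/A(R_\alpha))\ge 0$ by $R_\alpha\strong S$, whence $R\strong S$.

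The main obstacle is (ii), and specifically the simultaneous control of the two pieces of $\delta$: enlarging the base strictly decreases both the transcendence degree and the $\Q$-linear dimension, and their difference is not monotone, so a naive monotonicity argument across the bases $A(R_1)$ and $A(R_2)$ fails. The whole point of the auxiliary tuple $\bar{w}$ is to neutralise the linear-dimension mismatch between the two bases — leaving only the transcendence-degree term, which is monotone in the favourable direction — while contributing nothing of its own because it is assembled from $\Q$-combinations of $\bar{x}$. The two points I expect to need the most care are verifying, via the modular law, that $\bar{w}$ genuinely equalises the linear dimensions, and that $\bar{w},\exp(\bar{w})$ add no transcendence degree over $\bar{x}\cup A(R_1)$.
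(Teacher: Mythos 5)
Your proof is correct. The paper gives no argument for this lemma (it is explicitly ``left as a straightforward exercise''), so there is nothing to compare against; your write-up simply fills in the exercise. The only step with genuine content is (ii), and your device works: since $\bar{w}$ is chosen inside $\langle\bar{x}\rangle_\Q\cap A(R_2)$ and represents a basis modulo $\langle\bar{x}\rangle_\Q\cap A(R_1)$, one gets $\langle\bar{x}\rangle_\Q\cap\langle\bar{w}\cup A(R_1)\rangle_\Q=\langle\bar{x}\rangle_\Q\cap A(R_2)$, so the linear-dimension terms over the two bases agree while the transcendence-degree term is monotone the right way, and $\delta(\bar{w}/A(R_1))\ge 0$ and $\delta(\bar{w}/\bar{x}\cup A(R_1))=0$ hold as you claim (the latter using that $\exp(q x)$ is algebraic over $\Q(\exp(x))$ for $q\in\Q$). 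Parts (i), (iii) and (iv) are the routine finite-character arguments one would expect.
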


\begin{prop}\label{chain prop}
  Suppose $F_0 \strong F$ is a strong extension of partial E-fields, and $F_0,F$ are \emph{\egg}, that is, they are generated as fields by the graphs of their exponential maps, $A(F) \cup \exp(A(F))$, and similarly for $F_0$. Then for
  some ordinal $\lambda$ there is a chain $(F_\alpha)_{\alpha \le
    \lambda}$ of partial E-domains such that for
  all ordinals $0 \le \alpha \le \beta \le \lambda$,
  \begin{enumerate}
  \item $F = F_\lambda$
  \item $F_\alpha$ is \egg
  \item For limit $\beta$, $F_\beta = \bigcup_{\alpha <
      \beta}F_\alpha$ 
  \item $\td(F_{\beta+1}/F_\beta)$ is finite
  \item $F_\alpha \strong F_\beta$
  \end{enumerate}
\end{prop}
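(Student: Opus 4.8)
The plan is to construct the chain by transfinite recursion, enlarging the current field at each successor step by a \emph{self-sufficient hull} of a single new element, exactly as in Hrushovski's amalgamation constructions. The only invariant I would carry through the recursion is that $F_\alpha$ is \egg\ and $F_\alpha \strong F$. From this invariant conditions (1)--(5) follow cheaply: in particular (5) holds because strongness restricts downwards, since if $F_\alpha \strong F$ and $A(F_\beta) \subs A(F)$ then $\delta(\bar{x}/A(F_\alpha)) \ge 0$ already for every tuple $\bar{x}$ from $A(F_\beta)$, which is precisely $F_\alpha \strong F_\beta$.

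The heart of the matter is the successor step. Assuming $F_\alpha \strong F$ with $F_\alpha \neq F$, and noting that this forces $A(F_\alpha) \neq A(F)$ since both fields are \egg, I would fix some $b \in A(F) \minus A(F_\alpha)$. Because $F_\alpha \strong F$, the value $\delta(\bar{x}/A(F_\alpha))$ is a non-negative integer for every finite tuple $\bar{x}$ from $A(F)$; hence the set of these values over tuples $\bar{x}$ containing $b$ has a least element, attained by some finite $\bar{x}_0 \ni b$. I would then let $F_{\alpha+1}$ be the partial E-subfield of $F$ with $A(F_{\alpha+1}) = \langle A(F_\alpha), \bar{x}_0\rangle_\Q$, generated as a field by $A(F_{\alpha+1}) \cup \exp(A(F_{\alpha+1}))$. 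This $F_{\alpha+1}$ is \egg\ by construction and contains $F_\alpha$; moreover the only new exponentials, $\exp(a + \sum_i q_i x_i) = \exp(a)\prod_i \exp(x_i)^{q_i}$, are algebraic over $F_\alpha(\bar{x}_0, \exp \bar{x}_0)$, so $\td(F_{\alpha+1}/F_\alpha) \le 2\card{\bar{x}_0}$ is finite, giving (4).

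To check that the invariant is preserved, i.e.\ that $F_{\alpha+1} \strong F$, I would take an arbitrary finite tuple $\bar{y}$ from $A(F)$ and apply the addition formula $\delta(\bar{y}/A(F_{\alpha+1})) = \delta(\bar{y}\bar{x}_0/A(F_\alpha)) - \delta(\bar{x}_0/A(F_\alpha))$, which comes from additivity of transcendence degree and of $\Q$-linear dimension along $A(F_\alpha) \subs A(F_{\alpha+1})$ (using that $F_{\alpha+1}$ is algebraic over $F_\alpha(\bar{x}_0,\exp\bar{x}_0)$). Since $\bar{y}\bar{x}_0$ is again a finite tuple containing $b$, minimality of $\delta(\bar{x}_0/A(F_\alpha))$ forces the right-hand side to be $\ge 0$, so $F_{\alpha+1} \strong F$. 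At a limit stage $\beta$ I would set $F_\beta = \bigcup_{\alpha < \beta} F_\alpha$; this is \egg\ as an increasing union of \egg\ fields, lemma~\ref{AEC axioms}(iii) gives $F_\alpha \strong F_\beta$ for $\alpha < \beta$, and lemma~\ref{AEC axioms}(iv) upgrades the hypotheses $F_\alpha \strong F$ to $F_\beta \strong F$, so the invariant survives limits too.

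Finally, to guarantee that the recursion actually reaches $F$, I would fix a well-ordering of $A(F)$ and always take $b$ to be the least element not yet lying in $A(F_\alpha)$. Since this $b$ enters $A(F_{\alpha+1})$, the least index of $A(F)$ missing from $A(F_\alpha)$ strictly increases at each successor stage, which cannot persist through all ordinals; so at some stage $\lambda$ nothing is missing, i.e.\ $A(F_\lambda) = A(F)$, whence $F_\lambda = F$ because $F$ is \egg, giving (1), with (2) and (3) immediate from the construction. I expect the successor step to be the sole obstacle: the content lies entirely in the minimization argument together with the addition formula, which jointly show that a $\delta$-minimizing finite enlargement is automatically strong in $F$. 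Everything else is routine transfinite bookkeeping underwritten by lemma~\ref{AEC axioms}.
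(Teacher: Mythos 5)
Your proposal is correct and follows essentially the same route as the paper: enumerate $A(F)$, at each successor stage adjoin a finite tuple containing the next missing element chosen to minimize $\delta$ over $A(F_\alpha)$, verify $F_{\alpha+1}\strong F$ via the addition formula, handle limits with lemma~\ref{AEC axioms}(iii)--(iv), and deduce (5) from the fact that $F_\alpha\strong F$ restricts to $F_\alpha\strong F_\beta$ for intermediate subfields. Your explicit write-out of the addition-formula step is in fact slightly more detailed than the paper's one-line appeal to minimality.
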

\begin{proof}
  Let $\lambda$ be the initial ordinal of cardinality $\card{A(F)}$, and list
  $A(F)$ as $(r_\alpha)_{\alpha<\lambda}$.
  We inductively construct $F_\beta$ satisfying (1)---(5) and such that $r_\beta \in F_{\beta+1}$ and $F_\beta \strong F$.

  At a limit stage $\beta$, define $A(F_\beta) = \bigcup_{\alpha < \beta}A(F_\alpha)$. Take $F_\beta$ to be the partial E-subfield of $F$ generated by $A(F_\beta)$, so (2) and (3) hold. (5) holds by part (iii) of lemma~\ref{AEC axioms}, and $F_\beta \strong F$ by part (iv) of lemma~\ref{AEC axioms}.

  For a successor $F_{\beta+1}$, if $r_\beta \in A(F_\beta)$, take $F_{\beta+1} =  F_\alpha$. Otherwise, by induction $F_\beta \strong F$, so for any finite tuple $\bar{x}$ from $A(F)$ we have $\delta(\bar{x}/F_\beta) \ge 0$. Choose a tuple $\bar{x}$ containing $r_\beta$ such that $\delta(\bar{x}/R_\beta)$ is minimal. Let $A(F_{\beta+1})$ be the $\Q$-subspace of $A(F)$ generated by $A(F_\beta)$ and $\bar{x}$, and take $F_{\beta+1}$ to be the partial E-subfield of $F$ generated by $A(F_\beta)$. By the minimality of $\delta(\bar{x}/F_\beta)$, $F_{\beta+1} \strong F$. For any $\alpha \le \beta$, since $F_\alpha \strong F$, it follows that $F_\alpha \strong F_{\beta+1}$. Also $\td(F_{\beta+1}/F_\beta) \le 2 \card{\bar{x}}$ which is finite, so (4) holds. Finally, $\bigcup_{\alpha<\lambda}A(F_\alpha) = A(F)$, so $F = F_\lambda$.
\end{proof}

\section{Extending derivations}

Let $F_0 \subs F$ be an extension of (pure) fields, and let $\D \in \Der(F_0)$. There are spaces of differentials $\Omega(F)$ and $\Omega(F/F_0)$ appropriate for considering all derivations on $F$ and those which vanish on $F_0$. We construct an intermediate space of differentials appropriate for considering extensions of $\D$ to $F$. 

\begin{defn}
 Let $\Omega(F/\D)$ be the quotient of $\Omega(F)$ by the relations $\sum a_i db_i = 0$ for those $a_i,b_i \in F_0$ such that $\sum a_i \D b_i = 0$.
\end{defn}
We naturally have quotient maps
\[\Omega(F) \rOnto \Omega(F/\D) \rOnto \Omega(F/F_0).\]
\begin{lemma}
 Let $\Der(F/\D) = \class{\eta \in \Der(F)}{(\exists \lambda \in F) \eta\restrict{F_0} = \lambda\D}$. Then $\Der(F/\D)$ is the dual space of $\Omega(F/\D)$.
\end{lemma}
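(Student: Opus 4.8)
The plan is to use the standard duality identifying $\Der(F)$ with the dual space $\Omega(F)^*$, and then read off what the quotient map $\Omega(F) \onto \Omega(F/\D)$ does to it. Recall that every $\eta \in \Der(F)$ corresponds to a unique $F$-linear functional $\eta^*$ on $\Omega(F)$ with $\eta = \eta^* \circ d$, and $\eta \mapsto \eta^*$ is an $F$-linear bijection onto $\Omega(F)^*$. Since $\Omega(F/\D)$ is a quotient of $\Omega(F)$, its dual space is canonically the subspace of $\Omega(F)^*$ consisting of those functionals vanishing on the kernel $K$ of $\Omega(F) \onto \Omega(F/\D)$. So it suffices to show that, under the bijection $\eta \mapsto \eta^*$, the condition ``$\eta^*$ vanishes on $K$'' is equivalent to ``$\eta \in \Der(F/\D)$''.

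First I would unwind what vanishing on $K$ means. By definition $K$ is the $F$-submodule of $\Omega(F)$ generated by the elements $\sum a_i db_i$ with $a_i, b_i \in F_0$ and $\sum a_i \D b_i = 0$; since $\eta^*(\sum a_i db_i) = \sum a_i \eta(b_i)$, the functional $\eta^*$ vanishes on $K$ exactly when $\sum a_i \eta(b_i) = 0$ for every such relation. I would then reformulate this intrinsically: writing $\D^* \colon \Omega(F_0) \to F_0$ and $\eta_0^* \colon \Omega(F_0) \to F$ for the $F_0$-linear maps induced by $\D$ and by the restriction $\eta\restrict{F_0}$ (a derivation $F_0 \to F$), an element $\sum a_i db_i$ of $\Omega(F_0)$ lies in $\ker \D^*$ iff $\sum a_i \D b_i = 0$, and the generators of $K$ are precisely the images of $\ker \D^*$ under the natural map $\Omega(F_0) \to \Omega(F)$. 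Hence $\eta^*$ vanishes on $K$ iff $\ker \D^* \subs \ker \eta_0^*$.

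The remaining content is a linear-algebra fact: for the $F_0$-linear maps $\D^* \colon \Omega(F_0) \to F_0$ and $\eta_0^* \colon \Omega(F_0) \to F$, we have $\ker \D^* \subs \ker \eta_0^*$ if and only if $\eta_0^* = \lambda\D^*$ for some $\lambda \in F$. If $\D = 0$ then $\ker \D^* = \Omega(F_0)$ forces $\eta_0^* = 0 = 0 \cdot \D^*$. If $\D \neq 0$ then $\D^*$ surjects onto the field $F_0$, so $\ker \D^*$ is a hyperplane; fixing $\omega$ with $\D^*\omega \neq 0$ and setting $\lambda = \eta_0^*\omega / \D^*\omega \in F$, any $\omega' \in \Omega(F_0)$ splits as $\omega' = c\omega + (\omega' - c\omega)$ with $c = \D^*\omega'/\D^*\omega \in F_0$ and $\omega' - c\omega \in \ker \D^* \subs \ker \eta_0^*$, whence $\eta_0^*\omega' = c\,\eta_0^*\omega = \lambda\D^*\omega'$. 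Translating back, $\eta_0^* = \lambda\D^*$ says exactly that $\eta\restrict{F_0} = \lambda\D$, i.e. $\eta \in \Der(F/\D)$; the converse is immediate, since $\eta\restrict{F_0} = \lambda\D$ gives $\sum a_i \eta(b_i) = \lambda \sum a_i \D b_i = 0$. Combining, $\eta \mapsto \eta^*$ carries $\Der(F/\D)$ isomorphically onto $\Omega(F/\D)^*$.

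I expect the one genuinely delicate point to be this linear-algebra step, precisely because $\D^*$ takes values in $F_0$ while $\eta_0^*$ takes values in the possibly larger field $F$: the argument works only because $\ker \D^*$ is already a hyperplane over $F_0$, so that containing it pins $\eta_0^*$ down to a single $F$-scalar multiple of $\D^*$. Everything else is bookkeeping with the universal property of $\Omega$ and the defining relations of the quotient $\Omega(F/\D)$.
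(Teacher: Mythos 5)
Your proof is correct and is essentially the paper's argument: the paper likewise identifies $\Omega(F/\D)^*$ with the functionals on $\Omega(F)$ killing the defining relations, and for the converse direction picks $b_0 \in F_0$ with $\D b_0 \neq 0$, sets $\lambda = \eta b_0/\D b_0$, and uses the relation $(\D b_0)\,db - (\D b)\,db_0 = 0$ to force $\eta b = \lambda \D b$ --- which is exactly your hyperplane decomposition $\omega' - c\omega \in \ker\D^*$ written out for $\omega = db_0$, $\omega' = db$. Your recasting through $\Omega(F_0)$ and $\ker\D^*$ is a clean packaging of the same computation, and you correctly isolate the one delicate point (that $\ker\D^*$ has $F_0$-codimension one, pinning $\eta_0^*$ to a single $F$-multiple of $\D^*$).
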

\begin{proof}
  Suppose $\eta \in \Der(F/\D)$. Then for each relation $\sum a_i \D b_i = 0$ we have $\sum a_i \eta b_i = \lambda \sum a_i \D b_i = 0$, so $\eta$ factors as
\[F \rTo^d \Omega(F/\D) \rTo^{\eta^*} F\]
for some $F$-linear map $\eta^*$.
Now if $\Omega(F/\D) \rTo^{\eta^*} F$ is any $F$-linear map, define $\eta = \eta^* \circ d$. Then $\eta \in \Der(F)$ and we must show $\eta \in \Der(F/\D)$. If $\D b = 0$ for some $b \in F_0$, then the relation $db=0$ holds in $\Omega(F/\D)$ and so $\eta b = 0$. If this holds for all $b \in F_0$ we are done. Otherwise choose $b_0 \in F_0$ such that $\D b_0 \neq 0$ and let $\lambda = \eta b_0/\D b_0$. Let $b \in F_0$, and write $b' = \D b$ and $b_0' = \D b_0$. Then $b_0' \D b - b' \D b_0 = 0$, so $b_0' db - b'db_0 = 0$ in $\Omega(F/\D)$, so $b_0'\eta b - b' \eta b_0 = 0$, that is, $\eta b = \lambda \D b$. Hence $\eta\restrict{F_0} = \lambda \D$ and so $\eta \in \Der(F/\D)$.
\end{proof}

\begin{theorem}\label{derivations extend}
  Suppose $F_0 \strong F$ is a strong extension of partial E-fields and $F_0$ is \egg. Then every E-derivation on $F_0$ extends to $F$.
\end{theorem}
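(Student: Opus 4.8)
The plan is to reduce to strong extensions of finite transcendence degree and then extend $\D$ one step at a time, the essential point being a dimension count governed by Ax's theorem.

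First I would use Proposition~\ref{chain prop} to present $F_0 \strong F$ as the union of a chain $(F_\alpha)_{\alpha \le \lambda}$ of \egg\ partial E-fields with $F_0$ at the bottom, where $F_\alpha \strong F_\beta$ for $\alpha \le \beta$ and each successor extension $F_\alpha \strong F_{\alpha+1}$ has finite transcendence degree. I would then construct a coherent family of E-derivations $\D_\alpha$ on $F_\alpha$, each extending $\D$, by transfinite recursion. At a limit stage $F_\beta = \bigcup_{\alpha<\beta} F_\alpha$, so the union $\D_\beta = \bigcup_{\alpha<\beta}\D_\alpha$ is a well-defined E-derivation since the defining conditions are finitary. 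The theorem thus reduces to the successor step: given a strong extension $F_0 \strong F_1$ of \egg\ partial E-fields with $\td(F_1/F_0)$ finite, every E-derivation on $F_0$ extends to $F_1$.

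For this step, fix a $\Q$-basis $\bar a = (a_1,\dots,a_n)$ of $A(F_1)$ over $A(F_0)$, which is finite because $F_1$ is \egg\ of finite transcendence degree over $F_0$. Since $\Char F_1 = 0$, I can extend $\D$ to an ordinary derivation on $F_1$; the set of all such extensions is an affine space over $\Der(F_1/F_0)$ of dimension $t := \td(F_1/F_0)$. Because $\D$ is already exponential on $F_0$ and $A(F_1) = \langle A(F_0),\bar a\rangle_\Q$, an extension is an E-derivation exactly when it kills each defect $\eta(\exp a_i) - \exp(a_i)\,\eta(a_i)$. Writing $\bar\Psi \colon \Der(F_1/F_0) \to F_1^n$ for the $F_1$-linear defect map $\zeta \mapsto (\zeta(\exp a_i) - \exp(a_i)\zeta(a_i))_i$ (the dual picture to the module $\Omega(F_1/\D)$ constructed above), whose kernel is precisely $\EDer(F_1/F_0)$, an E-derivation extending $\D$ exists as soon as the defect vector of one ordinary extension lies in the image of $\bar\Psi$; it certainly suffices that $\bar\Psi$ be surjective onto $F_1^n$. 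Surjectivity is a dimension count: the image of $\bar\Psi$ has dimension $t - \dim_{F_1}\EDer(F_1/F_0)$, so $\bar\Psi$ is onto once $\dim_{F_1}\EDer(F_1/F_0) \le t - n$. Here strongness gives $\delta(\bar a/A(F_0)) = t - n \ge 0$, while Ax's theorem, in the form of Corollary~\ref{Ax corollary}, bounds the $\cl^{F_1}$-dimension $\dim_{F_1}\EDer(F_1/F_0) = \dim^{F_1}(\bar a/F_0)$ by this same predimension $t-n$. Combining the two inequalities closes the count.

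The hard part is precisely this invocation of Ax's theorem, because Corollary~\ref{Ax corollary} requires the base to be $\cl^{F_1}$-closed, whereas $F_0$ need not be. Applying Theorem~\ref{Ax theorem} directly to the set of derivations $\EDer(F_1/F_0)$ yields the field of constants $\cl^{F_1}(F_0)$, which may properly contain $F_0$: an element of $A(F_1)$ that is $\cl$-algebraic over $F_0$ yet $\Q$-linearly independent over $A(F_0)$ replaces $\ldim_\Q(\bar a/A(F_0)) = n$ by the smaller $\ldim_\Q(\bar a/\cl^{F_1}(F_0))$, and the naive count develops slack. I expect this to be the main obstacle. I would resolve it by first extending $\D$ across $\cl^{F_1}(F_0)$ — where the derivation values are forced, so existence is comparatively soft — thereby reducing to a $\cl^{F_1}$-closed base, in which case Ax's theorem applies cleanly and the dimension count above goes through.
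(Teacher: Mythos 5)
Your reduction to a finite-transcendence-degree successor step, and your reformulation of the extension problem as surjectivity of the defect map $\bar\Psi$ (equivalently, the bound $\dim_{F_1}\EDer(F_1/F_0)\le t-n$), match the structure of the paper's argument, and you have correctly located the sticking point: Corollary~\ref{Ax corollary} needs a $\cl^{F_1}$-closed base, which $F_0$ need not be. The gap is in your proposed repair. Extending $\D$ across $\cl^{F_1}(F_0)$ is not ``comparatively soft'': the fact that the values of any extension are \emph{forced} on $\cl^{F_1}(F_0)$ gives uniqueness, not existence. Existence requires checking that the forced value at $b$ and the forced value at $e^b$ satisfy the compatibility $\D e^b=e^b\,\D b$ for every relevant $b\in A(F_1)\cap\cl^{F_1}(F_0)$, and this consistency is precisely the content of the theorem for the sub-extension $F_0\subs\cl^{F_1}(F_0)$ --- the one place where a derivation has no remaining degrees of freedom with which to absorb an inconsistency. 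In other words, you have pushed the entire difficulty into the step you declare soft, and offer no argument for it. (Note also that $\cl^{F_1}(F_0)$ can properly contain $F_0$ even under strongness: strongness directly rules out only new $b$ with \emph{both} $b$ and $e^b$ algebraic over $F_0$, not new $\cl$-dependencies in general.)

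What the paper actually uses at this point is not Corollary~\ref{Ax corollary} but a sharper statement extracted from the \emph{proof} of Ax's theorem (Fact~\ref{Ax fact}): if the defect differentials $\hat\omega_i$, the images of $\omega_i=\frac{de^{a_i}}{e^{a_i}}-da_i$ in the pure-field module $\Omega(F_1/F_0)$, are $F_1$-linearly dependent, then some nonzero $\Z$-linear combination $b=\sum m_ia_i$ has both $b$ and $e^b$ algebraic over $F_0$, whence $\delta(b/F_0)=0-1<0$, contradicting $F_0\strong F_1$. Linear independence of the $\hat\omega_i$ is exactly the rank bound your dimension count needs, since $\dim\EDer(F_1/F_0)=t-\rk(\hat\omega_i)\le t-n$, and it is obtained directly over the base $F_0$, with no detour through the $\cl$-closure. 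So your argument can be completed, but only by replacing the appeal to Corollary~\ref{Ax corollary} (together with the unproved extension across $\cl^{F_1}(F_0)$) by Fact~\ref{Ax fact}; as it stands the proposal is missing the key input.
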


\begin{proof}
  Let $F'$ be the partial E-subfield of $F$ generated by the graph of exponentiation of $F$. Then every E-derivation on $F'$ extends to $F$, as only the field operations must be respected and the characteristic is zero. So we may assume $F=F'$. Now by proposition~\ref{chain prop} it is enough to prove the theorem for extensions of \egg\ partial E-fields $F_1 \strong F_2$ such that $\td(F_2/F_1)$ is finite. Let $\D$ be an E-derivation on $F_1$. Let $\EDer(F_2/\D) = \Der(F_2/\D) \cap \EDer(F_2)$.

  Let $a_1,\ldots,a_n$ be a $\Q$-basis for $A(F_2)$ over $A(F_1)$, and let $\omega_i = \frac{de^{a_i}}{e^{a_i}} - da_i \in \Omega(F_2)$. Let $\hat{\omega}_i$ be the image of $\omega_i$ in $\Omega(F_2/F_1)$ under the natural quotient map
$\Omega(F_2) \rOnto \Omega(F_2/F_1)$.

  We use the following intermediate step in the proof of Ax's theorem (\ref{Ax
  theorem} of this paper). Although this statement is not isolated in Ax's paper, it can be obtained from his proof. It is also the special case of proposition~3.7 of \cite{TEDESV} where the group $S$ is $\gm^n$.
  \begin{fact}\label{Ax fact}
    If the differentials $\hat{\omega}_1,\ldots,\hat{\omega}_n$ are $F_2$-linearly
    dependent in $\Omega(F_2/F_1)$ then there is a non-zero $\Z$-linear combination $b = \sum_{i=1}^n m_i a_i$ such that $b$ and $e^b$ are both algebraic over $F_1$.
  \end{fact}
  So if the $\hat{\omega}_i$ are $F_2$-linearly dependent, then for some such $b$ we have 
  \begin{eqnarray*}
    \delta(b/F_1) &=& \td(b,e^b/A(F_1) \cup \exp(A(F_1))) - \ldim_\Q(b/A(F_1))\\
	  	  &=& \td(b,e^b/F_1) - \ldim_\Q(b/A(F_1))\\
	 	  &=& 0 - 1 < 0
  \end{eqnarray*}
  which contradicts $F_1 \strong F_2$. Thus the $\hat{\omega}_i$ are $F_2$-linearly independent in $\Omega(F_2/F_1)$.

  Let $\Lambda$ be the $F_2$-subspace of $\Omega(F_2)$ generated by $\omega_1,\ldots,\omega_n$. 
  The space of derivations $\Der(F_2)$ is the dual space of $\Omega(F_2)$, so we can consider the annihilator of $\Lambda$ in it. By definition of $\Lambda$, $\EDer(F_2/F_1) = \Der(F_2/F_1) \cap \Ann(\Lambda)$ and $\EDer(F_2/\D) = \Der(F_2/\D) \cap \Ann(\Lambda)$.
We have shown that the image of $\Lambda$ in $\Omega(F_2/F_1)$ has dimension $n$, and hence the image of $\Lambda$ in $\Omega(F_2/\D)$ also has dimension $n$. The subspaces $\Der(F_2/F_1)$ and $\Der(F_2/\D)$ of $\Der(F_2)$ are dual to the quotients $\Omega(F_2/F_1)$ and $\Omega(F_2/\D)$ of $\Omega(F_2)$, and hence $\Ann(\Lambda)$ has codimension $n$ in $\Der(F_2/F_1)$, and also in $\Der(F_2/\D)$.

If $\D = 0$ the result is trivial. Otherwise, $\dim \Der(F_2/\D) = \dim \Der(F_2/F_1) + 1$, so $\dim \EDer(F_2/\D) = \dim \EDer(F_2/F_1) + 1$. Thus there is $\eta \in \EDer(F_2/\D) \minus \EDer(F_2/F_1)$.
Then $\eta\restrict{F_1} = \lambda \D$ for some non-zero $\lambda$. Let $\eta' = \lambda^{-1}\eta$. Then $\eta'$ extends $\D$ to $F_2$.
\end{proof}

If $F$ is an E-field with $\EDer(F) = \{0\}$, then of course this zero
derivation extends to any E-field extension. Not all E-field
extensions of $F$ are strong, so the converse to the theorem is false.

\section{Proofs of the main theorems}

\begin{prop}
  If $F$ is a partial E-field and $C$ is a subset of $F$ then $\cl^F(C) \subs \ecl^F(C)$.
\end{prop}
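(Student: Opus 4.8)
The plan is to prove $\cl^F(C) \subs \ecl^F(C)$, which combined with Proposition~\ref{ecl subs cl} gives that the two closure operators agree. The contrapositive is the natural thing to aim for: if $a \notin \ecl^F(C)$, I must produce an E-derivation $\D \in \EDer(F/C)$ with $\D a \neq 0$, thereby witnessing $a \notin \cl^F(C)$. The key tool is Theorem~\ref{derivations extend}: a strong extension of \egg\ partial E-fields allows E-derivations to extend. So the whole strategy is to set up a small partial E-subfield $C_0 \strong F$ inside which $a$ is exponentially transcendental, build a nonzero E-derivation there that doesn't kill $a$, and extend it to all of $F$.

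First I would reduce to the case where $C$ is an E-subfield, indeed an \egg\ partial E-subfield, using Lemma~\ref{ecl closure} and the fact (from Proposition~\ref{finite character}) that $\cl^F$ has finite character; replacing $C$ by the E-subfield it generates changes neither closure. Then, assuming $a \notin \ecl^F(C)$, the idea is to extend $C$ to a partial E-subfield $C' \strong F$ such that $a \in C'$ but $a \notin \ecl^{C'}(C)$, while keeping $C'$ \egg. Concretely, I would adjoin $a$ (and whatever is forced, such as $\exp(a)$ if $a \in A(F)$) to get a small extension and check, via the predimension $\delta$, that the extension $C \strong C'$ remains strong. The point of exponential transcendence of $a$ over $C$ is precisely that $\delta(a/C) \ge 1 > 0$: since no Khovanskii system witnesses $a \in \ecl^F(C)$, the Jacobian nondegeneracy that Ax's theorem (via Corollary~\ref{Ax corollary}) would force cannot occur, so $a$ genuinely contributes to the predimension and the derivation space $\EDer(C'/C)$ is nonzero at $a$.

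Next, inside $C'$ I would construct an E-derivation $\D \in \EDer(C'/C)$ with $\D a \neq 0$. The mechanism mirrors the exchange-property argument in the lemma for $\cl^R$: because $a \notin \ecl^{C'}(C)$, and using the characterization of $\Xi$ via Lemma~\ref{Xi char2} together with the relationship between $\dim^{C'}$ and $\delta$ established in Corollary~\ref{Ax corollary}, the universal E-derivation $d$ on $\Xi(C'/C)$ does not annihilate $da$, so $da \neq 0$ there and a suitable $\D \in \EDer(C'/C)$ with $\D a \neq 0$ exists. Finally, Theorem~\ref{derivations extend}, applied to the strong extension $C' \strong F$ of \egg\ partial E-fields, extends this $\D$ to an E-derivation on $F$ vanishing on $C$ (since it already vanishes on $C$ and extension preserves this), with $\D a \neq 0$ still. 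Hence $a \notin \cl^F(C)$, completing the contrapositive.

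The main obstacle I anticipate is the middle step: correctly arranging the small strong extension $C'$ of $C$ so that simultaneously (i) $a \in C'$, (ii) $C \strong C'$ remains strong, and (iii) $C'$ is \egg, so that Theorem~\ref{derivations extend} actually applies. The delicacy is that $a$ may or may not lie in $A(F)$, and one must be careful about which elements (and their exponentials) are forced into $A(C')$ to keep the structure a genuine \egg\ partial E-field while preserving strongness. Verifying $\delta(\bar{x}/C) \ge 0$ for all relevant tuples, and connecting the nonvanishing of $da$ in $\Xi(C'/C)$ to the exponential transcendence of $a$ via the dimension-theoretic content of Corollary~\ref{Ax corollary}, is where the real work lies; the extension of the derivation afterwards is then essentially automatic.
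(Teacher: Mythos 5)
Your overall strategy (work inside a small strong \egg{} subfield, produce an E-derivation there not killing $a$, extend it to $F$ by Theorem~\ref{derivations extend}) uses the right toolkit, but the proposal is circular at exactly the point where the mathematical content lies. You claim that $a \notin \ecl^F(C)$ forces $\delta(a/C) \ge 1$ and, more generally, that exponential transcendence of $a$ makes the derivation space ``nonzero at $a$''. But the only route to $\delta(a/C)\ge 1$ available here is Corollary~\ref{Ax corollary}, which gives $\delta(a/C)\ge\dim^F(a/C)$ only when $C$ is $\cl^F$-closed, and only yields $\ge 1$ when $a\notin\cl^F(C)$ --- which is precisely the conclusion you are trying to reach. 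Since the known inclusion is $\ecl^F(C)\subs\cl^F(C)$ (Proposition~\ref{ecl subs cl}), knowing $a\notin\ecl^F(C)$ tells you nothing about $a\notin\cl^F(C)$. The same problem infects the construction of $C'$: strongness of $C'$ in $F$ requires $\delta(\bar x/C')\ge 0$ for \emph{all} tuples from $A(F)$, a Schanuel-type statement you do not yet have over an arbitrary finitely generated subfield, and there is no argument that such a $C'$ exists. Finally, the step ``$a\notin\ecl^{C'}(C)$ implies $da\neq 0$ in $\Xi(C'/C)$'' is exactly the proposition again, restated for $C'$, so the problem has been pushed down rather than solved.

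The paper argues in the direction $a\in\cl^F(C)\Rightarrow a\in\ecl^F(C)$ and uses two devices absent from your proposal. First, it applies Proposition~\ref{finite character} to find a finitely generated \egg{} subfield $F_1$ with $a\in\cl^{F_1}(C)$, chosen with $\ldim_\Q(A(F_1)/C)$ minimal; the minimality, combined with the contrapositive of Theorem~\ref{derivations extend} and Corollary~\ref{Ax corollary} applied to the $\cl^{F_1}$-closed set $F_2=\cl^{F_1}(C)$ (a hypothesis one can check without the theorem being proved), forces $\cl^{F_1}(C)=F_1$ and hence $\Xi(F_1/C)=0$. Second, the vanishing of the \emph{whole} module $\Xi(F_1/C)$, presented as in Lemma~\ref{Xi char2} by generators $da_1,\dots,da_n$ and Jacobian relations, means the gradient vectors span $F_1^n$, so one can select $n$ exponential polynomials with non-singular Jacobian --- this is how a Khovanskii system, and hence membership in $\ecl$, is actually produced. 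Your proposal never explains how a nonzero differential is extracted from the failure of every Khovanskii system (or, dually, how a Khovanskii system is extracted from the vanishing of $da$), and that extraction is the heart of the proof.
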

\begin{proof}
  Suppose $a \in \cl^F(C)$. We may assume that $F$ is \egg, that $C$ is finite, and that $C \subs A(F)$.
Now $a$ must be algebraic over the graph of exponentiation, and we know that both $\cl^F(C)$ and $\ecl^F(C)$ are relatively algebraically closed in $F$, so it is enough to prove the proposition for $a$ in the graph of exponentiation. Also, replacing $a$ by some $a'$ with $\exp(a') = a$ if necessary, we may assume that $a \in A(F)$.

By proposition~\ref{finite character}, there is $F_1$, an \egg\ partial E-subfield of $F$ such that $A(F_1)$ contains $C$ and $a$ and is finitely generated, and such that $a \in \cl^{F_1}(C)$. Choose such an $F_1$ with $\ldim_\Q(A(F_1)/C)$ minimal.

Let $F_2 = \cl^{F_1}(C)$. We claim that $F_2 = F_1$. Certainly $a \in F_2$, so if $F_2 \neq F_1$ then by minimality of $F_1$ we have $a \notin \cl^{F_2}(C)$. So there is an E-derivation $\D \in \EDer(F_2/C)$ which does not extend to $F_1$. Then, by theorem~\ref{derivations extend}, $F_2 \nstrong F_1$, but that contradicts corollary~\ref{Ax corollary} since $F_2$ is $\cl^{F_1}$-closed in $F_1$. Hence $F_2 = F_1$.

  By lemma~\ref{Xi char2}, $\Xi(F_1/C)$ is generated by $da_1,\ldots,da_n$, subject to the relations 
  \[\sum_{i=1}^n \frac{\partial f}{\partial X_i}(\bar{a}) da_i = 0 \]
  for $f \in C[\bar{X}]^E$ such that $f(\bar{a}) = 0$ in $F_1$.
  Since $F_1 = \cl^{F_1}(C)$, we have $\Xi(F_1/C) = 0$. Hence we can choose $f_1,\ldots,f_n$ such that the matrix $J = \begin{pmatrix} \frac{\partial f_j}{\partial X_i}(\bar{a})\end{pmatrix}_{i,j=1}^n$ has rank $n$, that is, is non-singular. Thus $a \in \ecl^{F_1}(C)$.

Now $F_1 \subs F$, so by the remark after lemma~\ref{ecl closure}, $\ecl^{F_1}(C) \subs \ecl^F(C)$. Hence $a \in \ecl^F(C)$ as required.
\end{proof}
Together with proposition~\ref{ecl subs cl}, that completes the proof that $\ecl^F = \cl^F$ for any partial E-field $F$, and it follows that $\ecl^F$ is a pregeometry. Theorem~\ref{ecl is pregeometry} is established, and theorem~\ref{Schanuel property} follows from corollary~\ref{Ax corollary}.

\begin{proof}[Proof of theorem~\ref{dim characterization}]
  Let $C = \ecl^F(\emptyset)$ and choose $\bar{y}$ such that $r \leteq \delta(\bar{x}\bar{y}/C)$ is minimal. Let $F_0$ be the \egg\ partial E-field extension of $C$ with $A(F_0)$ generated by $\bar{x}\bar{y}$ over $C$. Using the notation from the proof of theorem~\ref{derivations extend}, 
$\EDer(F_0/C) = \Der(F_0/C) \cap \Ann(\Lambda)$,
but $\Ann(\Lambda)$ has codimension $\ldim_\Q(\bar{x}\bar{y}/C)$ in $\Der(F_0/C)$ by fact~\ref{Ax fact}, hence
\[\ldim_{F_0}\EDer(F_0/C) = \td(F_0/C) - \ldim_\Q(\bar{x}\bar{y}/C) = \delta(\bar{x}\bar{y}/C) = r.\]
Since $\bar{y}$ is chosen with minimal $\delta$ we have $F_0 \strong F$, so, by theorem~\ref{derivations extend}, these E-derivations all extend to $F$. Hence $\dim^F(\bar{x}) \ge r$. Then, by theorem~\ref{Schanuel property}, $\dim^F(\bar{x}) = r$ as required.
\end{proof}

To prove theorem~\ref{SC counterexamples}, we give a more general result.
\begin{prop}
  In any partial E-field $F$, if $\bar{a}$ is an essential counterexample to the Schanuel property then $\bar{a}$ is contained in $\ecl^F(\emptyset)$.
\end{prop}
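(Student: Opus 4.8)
The plan is to prove the equivalent statement that $\dim^F(\bar a/C)=0$, where $C=\ecl^F(\emptyset)$. Since $\ecl^F$ is a pregeometry (Theorem~\ref{ecl is pregeometry}) and is idempotent, $\dim^F(\bar a/C)=0$ says exactly that $\bar a\subseteq\ecl^F(C)=\ecl^F(\ecl^F(\emptyset))=C$, which is the desired conclusion. Note also that, by Theorem~\ref{ecl is pregeometry}, $C$ is $\cl^F$-closed, so the Schanuel property of Theorem~\ref{Schanuel property} is available with base $C$; this will supply the lower bound $\delta(\bar a/C)\ge\dim^F(\bar a/C)\ge 0$.

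The core of the argument is to bound $\delta(\bar a/C)$ from above by comparing it with the predimension of $\bar a$ over a carefully chosen \emph{finite} piece of $C$. Set $W=\langle\bar a\rangle_\Q\cap\langle A(C)\rangle_\Q$, a subspace of the finite-dimensional space $\langle\bar a\rangle_\Q$, and choose a finite tuple $\bar c\subseteq A(C)$ that spans $W$. Because $\bar c$ spans exactly the intersection $\langle\bar a\rangle_\Q\cap\langle A(C)\rangle_\Q$, the two linear-dimension terms coincide, $\ldim_\Q(\bar a/C)=\ldim_\Q(\bar a/\bar c)$. At the same time, since $\bar c\subseteq C$, enlarging the base from $\bar c$ to $C$ can only decrease the relative transcendence degree occurring in the predimension. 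Subtracting the (equal) linear terms, I obtain $\delta(\bar a/C)\le\delta(\bar a/\bar c)$.

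Now essentiality finishes the upper bound. By the addition formula $\delta(\bar a/\bar c)=\delta(\bar a\bar c)-\delta(\bar c)=\delta(\bar a)-\delta(\bar c)$, the last equality holding because $\bar c\subseteq\langle\bar a\rangle_\Q$ and $\delta$ depends only on the $\Q$-span. As $\bar a$ is an essential counterexample and $\bar c$ is a tuple from $\langle\bar a\rangle_\Q$, essentiality gives $\delta(\bar c)\ge\delta(\bar a)$, whence $\delta(\bar a/\bar c)\le 0$ and therefore $\delta(\bar a/C)\le 0$. Combining with the Schanuel property $\delta(\bar a/C)\ge\dim^F(\bar a/C)\ge 0$ forces $\dim^F(\bar a/C)=0$, so $\bar a\subseteq\ecl^F(C)=C$, as required.

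The main obstacle is that $\delta(\bar x/-)$ is \emph{not} monotone in its base: enlarging the base by elements that are exponentially algebraic over it can strictly increase the predimension, so one cannot simply bound $\delta(\bar a/C)$ by $\delta(\bar a)$. The device that circumvents this is precisely the choice of $\bar c$ spanning $\langle\bar a\rangle_\Q\cap\langle A(C)\rangle_\Q$: this makes the two linear-dimension terms cancel, so the comparison $\delta(\bar a/C)\le\delta(\bar a/\bar c)$ rests only on the genuine monotonicity of transcendence degree in the base. Essentiality then controls $\delta(\bar a/\bar c)$ from above and the Schanuel property controls $\delta(\bar a/C)$ from below, squeezing $\dim^F(\bar a/C)$ to zero.
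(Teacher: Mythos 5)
Your argument is correct and is essentially the paper's own proof: the paper likewise takes $B=\tuple{\bar a}_\Q\cap\ecl^F(\emptyset)$ (your $W$), notes that the $\Q$-linear dimensions over $B$ and over $\ecl^F(\emptyset)$ agree while the transcendence degree is monotone in the base, and combines the addition formula with Theorem~\ref{Schanuel property}. The only difference is presentational --- the paper runs the argument contrapositively, deducing $\delta(B)<\delta(\bar a)$ from $\bar a\not\subseteq\ecl^F(\emptyset)$, whereas you argue directly via $\delta(\bar a/C)\le 0$ --- so no further comment is needed.
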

\begin{proof}
  Let $\bar{a}$ be a tuple from $F$, write $\tuple{\bar{a}}_\Q$ for its $\Q$-linear span, let $B = \tuple{\bar{a}}_\Q \cap \ecl^F(\emptyset)$, and suppose that $\bar{a} \not\subs \ecl^F(\emptyset)$, so $\tuple{\bar{a}}_\Q \neq B$. Then
\[\td(\bar{a},\exp(\bar{a})/B, \exp(B)) \ge \td(\bar{a},\exp(\bar{a})/\ecl^F(\emptyset)) \]
 and $\ldim_\Q(\bar{a}/B) = \ldim_\Q(\bar{a}/\ecl^F(\emptyset))$. So
\[\delta(\bar{a}/B) \ge \delta(\bar{a}/\ecl^F(\emptyset)) \ge \dim^F(\bar{a}) \ge 1.\]
and thus $\delta(B) = \delta(\bar{a}) - \delta(\bar{a}/B) < \delta(\bar{a})$, hence $\bar{a}$ is not an essential counterexample.
\end{proof}
Now, by remark~\ref{CCP}, $\ecl^\C(\emptyset)$ is countable, which proves theorem~\ref{SC counterexamples}.


\end{document}